\newtheorem{theorem}{Theorem}%[chapter]
\newtheorem{lemma}{Lemma}%[section]
\newtheorem{proposition}{Proposition}%[section]
\theoremstyle{definition}
\newtheorem{remark}{Remark}%[section]
\newtheorem{example}{Example}%[section]
\newcommand{\ddr}{\mathrm{d}}
\newcommand{\Small}[1]{\textstyle #1 \displaystyle}
\newcommand{\R}{\mathbb{R}}
\newcommand{\fprime}{{f}}
\newcommand{\Indic}{\mathds{1}}
\newcommand{\E}{\mathrm{E}}
\newcommand{\edr}{\mathrm{e}}
\newcommand{\comillas}[1]{``\,#1\,''}
\def\@maketitle{%
  \newpage
  \null
  \vskip 2em%
  \begin{center}%
  \let \footnote \thanks
    {\Large\bfseries \@title \par}%
    \vskip 2em%
    {\large
      \lineskip .5em%
      \begin{tabular}[t]{c}%
        \@author
      \end{tabular}\par}%
    \vskip .5em%
    {\normalsize \@date}%
  \end{center}%
  \par
  \vskip 1.5em}
\begin{document}

\title{Asymptotic behavior of the number of distinct values in a sample from the geometric stick-breaking process}

\author{Pierpaolo De Blasi%
  \thanks{\texttt{pierpaolo.deblasi@unito.it}}}
\affil{University of Torino and Carlo Alberto, Torino, Italy}

\author{Rams\'es H. Mena}
\affil{Universidad Nacional Aut\'onoma de Mexico, Mexico}

\author{Igor Pr\"unster}
\affil{Bocconi University and BIDSA, Milano, Italy}
\date{\today}

%\author{Pierpaolo De Blasi  \and
%        Rams\'es H. Mena 
%        \and
%        Igor Pr\"unster}
%\affiliation{University of Torino and Carlo Alberto, Torino, Italy}

%\institute{Pierpaolo De Blasi \at
%              University of Torino and Carlo Alberto, Torino, Italy \\
%%              Tel.: +123-45-678910\\
%%              Fax: +123-45-678910\\
%              \email{pierpaolo.deblasi@unito.it}           %  \\
%%             \emph{Present address:} of F. Author  %  if needed
%           \and
%           Rams\'es H. Mena 
%           \at
%           Universidad Nacional Aut\'onoma de Mexico, Mexico 
%           \and
%           Igor Pr\"unster 
%           \at
%           Bocconi University and BIDSA, Milano, Italy
%}

\maketitle

%\begin{center}
%{\large{\sc Asymptotic behavior of the number of distinct values in a sample from the geometric stick-breaking process}}
%\bigskip
%
%Pierpaolo De Blasi, Rams\'es H. Mena, Igor Pr\"unster
%\bigskip
%
%%\today
%\end{center}

%--------------------------------------------------------
%--------------------------------------------------------

\begin{abstract}
Discrete random probability measures are a key ingredient of Bayesian nonparametric inferential procedures. A sample generates ties with positive probability and a fundamental object of both theoretical and applied interest is the corresponding random number of distinct values. The growth rate can be determined from the rate of decay of the small frequencies implying that, when the decreasingly ordered frequencies admit a tractable form, the asymptotics of the number of distinct values can be conveniently assessed. We focus on the geometric stick-breaking process and we investigate the effect of the choice of the distribution for the success probability on the asymptotic behavior of the number of distinct values. We show that a whole range of logarithmic behaviors are obtained by appropriately tuning the prior. We also derive a two-term expansion and illustrate its use in a comparison with a larger family of discrete random probability measures having an additional parameter given by the scale of the negative binomial distribution.
\end{abstract}

\textbf{Keywords}:
Bayesian Nonparametrics; 
random probability measure;
geometric stick-breaking process;
asymptotic growth rate;
occupancy problem.
\\[-2mm]

%\textbf{MSC 2010} Primary: 
%62C10, % Statistics - Decision Theory - Bayesian problems; characterization of Bayes procedures
%62E17; % Statistics - Distribution theory - Approximation to distributions (non asymptotic)
%62E20; % Statistics - Distribution theory - Asymptotic distribution theory

%%%%%%%%%%%%%%%%%%%%%%%%%%%%%%%%%%%%%%%
\section{Introduction}
\label{sec:intro}
%%%%%%%%%%%%%%%%%%%%%%%%%%%%%%%%%%%%%%%

Discrete random probability measures can be represented by random frequencies at random locations as
\begin{equation}\label{eq:tildep}
  \tilde{p}(\ddr x)
  =\sum_{j\geq 1}w_j\delta_{x_j}(\ddr x).
\end{equation}  
The frequencies $(w_j)_{j\geq1}$ are $(0,1)$-valued variables such that $\sum_{j\geq 1}w_j=1$ almost surely (a.s.), and the locations $(x_j)_{j\geq 1}$ are draws from some distribution on a Polish space $\mathbb{X}$ endowed with the corresponding Borel $\sigma$-field. Discrete measures like $\tilde p$ are naturally suited to describe the structure a population made of potentially infinite different species or types, labeled by $x_j$, with certain random proportions modeled through $w_j$. Clearly, a sample drawn from $\tilde p$ will exhibit ties with positive probability  and thus the random number of distinct values in a sample of size $n$, here denoted by $K_n$, is of great interest. From a Bayesian nonparametric perspective the law of $\tilde p$ represents the prior distribution. Inference is carried out by predicting the number of new distinct values in an additional sample, conditional on an observed sample. See \citet{LMP07b}. According to the applied context at issue the distinct values or species are interpreted as distinct genes \citep{LMP07c}, words \citep{teh06}, economic agents \citep{lijoi2016}  etc. Another important statistical use of discrete random probality measures is in mixture modeling, when a layer is added to model the data distribution as in
  $$Y_i\sim f(Y_i|X_i),\quad 
  X_1,X_2,\ldots|\tilde p 
  \overset{\text{iid}}{\sim}\tilde p$$ 
for some probability kernel $f(y|x)$. Here $\tilde p$ acts as mixing distribution and $K_n$ represents the random number of mixture components, thus providing a flexible way to model unobserved heterogeneity in the population. The mixture is characterized by the component distribution $f(y|x_j)$, usually referred as the $j$th mixture component, and the mixing weights $w_j$. See \cite{IEEE15} for a recent review on the inferential implications of different choices of $\tilde p$. 

In probability theory, distributional properties of $K_n$ are of prime interest in combinatorial stochastic processes; see e.g. \cite{Arr:Tav:Bar:03},  \cite{Pit:06}, \cite{Gne:10}, \cite{Gne:Han:Pit:07}. The techniques employed to study the law of $K_n$ depend on the construction of the random frequencies $(w_j)$. \cite{Kar:67} studied the case of fixed frequencies and derived a key result, which forms the basis to establish general strong laws for $K_n$: it states that the growth of $K_n$ is ultimately determined by how small the small frequencies are, which can be conveniently expressed by the {\it tail behavior} of $(w_j)$ once decreasingly ordered. In particular, the faster the decay to zero, the slower $K_n$ diverges to infinity as $n$ increases. There exist essentially two regimes, logarithmic and polynomial growth. Notable examples are, respectively, the Dirichlet process \citep{Fer:73} and its two parameter extension known as Pitman-Yor process \citep{Pit:Yor:97}. 
The associated distributions of the frequencies in decreasing order, termed Poisson-Dirichlet and the two-parameter Poisson-Dirichlet, respectively, are not tractable enough for a direct application of Karlin's theory. Instead, the distribution of $K_n$ is derived from the Ewens and the Pitman-Ewens sampling formulae, cf. \cite{Pit:06}. In the former case $K_n$ is asymptotically normal, with both mean and variance of the order $\log n$. In the latter case the scale of $K_n$ is $n^\alpha$, where $\alpha\in(0,1)$ is the discount parameter of the Pitman-Yor process. The logarithmic growth of the Dirichlet process was first pointed out in \cite{Kor:Hol:73}. Within the logarithmic regime growth behaviors of $K_n$ slower than the logarithm, e.g. $(\log n)^\alpha$ with $\alpha<1$ or even iterated logarithms, can be achieved with so-called hierarchical processes \cite{camerlenghi2019}; see also \cite{argiento,bassetti2020}.
In this paper we are able to identify models leading to growth rates of the type $(\log n)^\beta$ with $\beta>1$, specifically we establish a growth rate $(\log n)^{m+2}$, for $m$ a nonnegative integer, for a class 
of tractable class of discrete random probability measures.
We stress that from a modeling perspective it is crucial to have tractable models, which cover the whole range of possible growth rates. See \cite{LMP07a,IEEE15,dahl,caron2017,francois2019,DCT20} for motivation and discussion of these issues in diverse application contexts also beyond exchangeability.
Note that a power logarithmic growth of $\E(K_n)$ can be obtained by means 
of a Dirichlet prior with a somehow artificial sample size-dependent 
specification of the total mass parameter; in particular, one needs the 
total mass parameter to grow with n, which leads to an increasingly 
informative prior as more data becomes available, an unnatural scenario.

The asymptotic evaluation $K_n$, together with its limiting distribution, has been also object of extensive research in the context of regenerative composition structures; see \cite{Gne:10} for a survey. In this setting the frequencies $(w_j)$ are constructed from the range of a multiplicative subordinator, that is from the exponential transform $1-\exp\{S(t)\}$ of a subordinator $S(t)$. The logarithmic and polynomial regimes can be recovered from Karlin's theory according to the variation at zero of the right tail of the L\'evy measure of $S(t)$. The $\log n$ regime corresponds to finite L\'evy measures, that is when $S(t)$ is a compound Poisson process. In this case, the frequencies $(w_j)$ can be conveniently defined in terms of a stick-breaking procedure, or residual allocation scheme, with
\begin{equation}\label{eq:SB}
  w_j=W_j\Small{\prod_{\ell<j}}(1-W_\ell)
\end{equation}  
for $(W_\ell)_{\ell\geq 1}$ independent and identically distributed (iid) $(0,1)$-valued random variables with distribution determined by the L\'evy measure. Exploiting the renewal representation of the composition structure, aymptotics for the moments of $K_n$ and a central limit theorem can be derived; cf. \cite{Gne:04}, \cite{Gne:etal:09}. 
\cite{Gne:Pit:Yor:06aop} show that when the right tail of the L\'evy measure is regularly varying at zero with index $-1< \alpha<0$, the scale of $K_n$ is $n^\alpha$ and the partition structure induced by the Pitman-Yor process can be recovered \citep{Gne:Pit:05aop}. In contrast, when the right tail diverges at zero like a slowly varying function, e.g. for $S(t)$ a gamma subordinator, a central limit theorem with mean of the order $(\log n)^2$ and variance of the order $(\log n)^3$ is obtained, cf. \cite{Gne:Pit:Yor:06ptrf}.

Discrete random probability measures with $w_j$ as in \eqref{eq:SB}, not necessarily with identically distributed $(W_\ell)_{\ell\geq 1}$, have been proposed in \cite{Ish:Jam:01} as a Bayesian nonparametric model and termed stick-breaking priors. The Dirichlet and the Pitman-Yor processes belong to this class, their distinctive property being that the law of $(w_j)_{j\geq 1}$ is invariant under size-biased permutation. 
In this setting, the distribution of $W_1$ is called the {\it structural distribution} of $(w_j)$, and the limiting behavior of $K_n$ in the Dirichlet and the Pitman-Yor process cases can be also derived using Karlin's theory from the variation at zero of the structural distribution; see \cite{Gne:Han:Pit:07}.

In this paper we further broaden the realm of application of the fundamental result of Karlin in order to derive a two-term expansion of the mean of $K_n$. 
The expansion relies on de Haan's regular variation theory and requires a precise assessment of the tail behavior of $(w_j)$ together with a deconditioning argument, cf. Theorem \ref{th:2nd-order}. To illustrate the applicability of this technique, we consider the geometric stick-breaking process, first proposed in \cite{Fue:Men:Wal:10}, which gained quite some popularity in Bayesian applications \citep{Men:Rug:Wal:11,Gut:Gut:Men:14,HMNW16}.
It is a discrete random probability measure \eqref{eq:tildep} with locations independent of the frequencies and, importantly,  $(w_j)_{j\geq1}$ naturally arranged in decreasing order, which facilitates the evaluation of the tail behavior of the sequence. Specifically the frequencies are of geometric type,
\begin{equation}\label{eq:geom}
%  p\sim\pi(p),\quad
  w_j=p(1-p)^{j-1},\quad j=1,2,\ldots
\end{equation}
with $p$, the probability of success, random and endowed with a (prior) distribution $\pi(p)$ on $(0,1)$. 
In Theorem \ref{th:pier2} we derive a two-term expansion for a choice of $\pi(p)$, the key technical tool being the regular variation of fractional integrals.
As anticipated, the leading term shows that the mean of $K_n$ can covers the whole range of logarithmic behaviors $(\log n)^{m+2}$, for $m$ a nonnegative integer, upon setting $\pi(p)$ as an exponential transform of the gamma distribution of shape parameter $m$. From a practical perspective this result widens the range of achievable asymptotic behaviors by means of tractable models and also allows  a principled prior elicitation.  
To illustrate the importance of the second order term in the expansion, we also consider an extension of the geometric stick-breaking process, which has an additional parameter $s$ corresponding to the scale of the negative binomial distribution. 
Such a construction reduces to the geometric stick-breaking process when $s=2$ and was exploited by \cite{Deb:Mar:Men:Pru:20} within a mixture model, to which the present study provides further theoretical support.
The frequencies $(w_j)_{j\geq1}$ are still decreasingly ordered and are available in closed form for any integer $s\geq 2$. The parameter $s$ determines the tail behavior of $(w_j)_{j\geq1}$, the larger $s$ the slower the decay to zero. In order to single out the effect of $s$ on $K_n$, we set $s=3$ and compare the asymptotic behavior of the mean of $K_n$ with that of the geometric stick-breaking case, while keeping $\pi(p)$ to be uniform. It turns out that $K_n$ grows faster for $s=3$, as predicted by Karlin's theory, the difference however emerging only in the second order term of the expansion, cf. Proposition \ref{prop:pier3}. 
We conjecture that similar conclusions apply also for $s$ an integer larger than $3$  and other prior specifications of $\pi(p)$, although we do not pursue it here. It would be of interest to investigate the asymptotics of higher order moments like the variance and whether a central limit theorem holds. These are left for future research.
\medskip

{\it Layout of the paper.}$\quad$ In Section \ref{sec:2} we review Karlin's theory and establish a general two-term expansion of the mean of $K_n$. In Section \ref{sec:3} we introduce the geometric stick-breaking process and investigate the impact of the choice of prior $\pi(p)$ on the asymptotic behavior of $K_n$. In Section \ref{sec:4} we deal with the negative binomial extension and apply the asymptotic expansion of Section \ref{sec:2} to show that the scale parameter $s$ enters in the second order term. Some proofs are deferred to the \hyperref[appn]{Appendix}.
\medskip

{\it Notation.}$\quad$
Let $F(x)$ be a positive nondecreasing function on $\R$ with $F(x)=0$ for $x\leq 0$ and $\alpha\geq 0$. The fractional integral of order $\alpha$ of $F(x)$ is given by
\begin{equation*}%\label{eq:frac-int}
  {}_\alpha F(x)=\frac{1}{\Gamma(\alpha+1)}
  \int_0^x(x-t)^\alpha \fprime(t)\ddr t.
\end{equation*}  
We use $f\sim g$ for $f/g\to1$, the limit being clear from the context. When either $f$ or $g$ is random, the notation $f\sim_\text{a.s.} g$ means that the asymptotic relation holds with probability one. For $x$ a real number, $\lfloor x \rfloor$ is the integer part of $x$.

%%%%%%%%%%%%%%%%%%%%%%%%%%%%%%%%%%%%%%%
\section{Occupancy problem and regular variation}
\label{sec:2}
%%%%%%%%%%%%%%%%%%%%%%%%%%%%%%%%%%%%%%%

Let $\tilde p$ be a discrete random probability measure \eqref{eq:tildep}. Assume $(w_j)_{j\geq1}$ and $(x_j)_{j\geq 1}$ are independent with $(x_j)_{j\geq 1}$ independent and identically distributed form a non atomic distribution. Then  $\tilde p$ is a {\it species sampling model} \citep{Pit:95}. The partition induced by a sample from $\tilde p$ depends only on  the random frequencies $(w_j)_{j\geq1}$ and can be studied in terms of a multinomial occupancy problem. The theory is well established and dates back to the seminal paper \cite{Kar:67}. The main tools are a Poissonization argument and regular variation theory. We provide a concise overview taking the set-up from \cite{Gne:Han:Pit:07}.

The multinomial occupancy problem can be described as the experiment of throwing balls independently at a fixed infinite series of boxes, with probability $w_j$ of hitting the $j$th box. First consider the case of fixed, or non random, frequencies. As $n$ balls are thrown, their allocation is captured by the array $X_n=(X_{n,j})_{j\geq 1}$ where $X_{n,j}$ is the number of balls out of the first $n$ that fall in box $j$. $K_n$, the number of occupied boxes, is then given by
  $K_n=\sum\nolimits_{j\geq 1}\Indic(X_{n,j}>0)$
with mean
  $$\E(K_n)=\sum\nolimits_{j\geq 1}(1-(1-w_j)^n).$$
In general, it is difficult to work with $\E(K_n)$ since the indicators in $K_n$ are not independent. In the Poissonized version of the problem the balls are thrown in continuous time at epochs of a unit rate Poisson process $(P(t),t\geq 0)$, which is independent of $(X_n,n=1,2,\ldots)$. The balls then fall in the boxes according to independent Poisson processes $(X_j(t))_{t\geq 0}$, at rate $w_j$ for box $j$. Hence
  $K(t):=K_{P(t)}
  =\sum\nolimits_{j\geq 1}\Indic(X_j(t)>0)$
and
  $$\Phi(t):=\E(K(t))
  =\sum\nolimits_{j\geq 1}(1-\edr^{-tw_j}).$$
Encoding the frequencies into the counting measure $\nu(\ddr x)=\sum_{j\geq 1}\delta_{w_j}(\ddr x)$ and integrating by parts, 
\begin{equation*}%\label{eq:Poissonization}
  \Phi(t)=\int_0^1(1-\edr^{-tx})\nu(\ddr x)
  =t\int_0^1\edr^{-tx}
  \overrightarrow\nu(x)\ddr x,
  %,\quad t\geq 0
\end{equation*}  
where 
  $\overrightarrow\nu(x)=\nu([x,1)),$
the right tail of $\nu$, represents the number of frequencies $w_j$ not smaller than $x$.
$\Phi(t)$ provides an approximation of $\E(K_n)$ for $n$ large according to
\begin{equation}\label{eq:Poissonization}
  |\E(K_n)-\Phi(n)|
  \leq \Small{\frac2n}\Phi(n)\to 0
\end{equation}  
cf. \cite[Lemma 1]{Gne:Han:Pit:07}. The convenience of working with $\Phi(t)$ is that, being $\Phi(t)$ the Laplace-Stieltjes transform of $\overrightarrow\nu(x)$, its behavior as $t\to\infty$ is determined by the behavior of $\overrightarrow\nu(x)$ as $x\to 0$ by an application of the Tauberian theorem; see \cite{Bin:Gol:Teu:87} for a full account on Abel-Tauberian theorems for Laplace transforms. Hence, ultimately, by regular variation theory the growth of $\E(K_n)$, as $n\to\infty$, is determined by the behavior of $\overrightarrow\nu(x)$ at zero. In the case of random frequencies, the same result holds with the counting measure $\nu(\ddr x)$ being replaced by its mean measure, and correspondingly adapting the meaning of $\overrightarrow\nu(x)$. See \cite[Section 7, Page 162]{Gne:Han:Pit:07} and Section \ref{sec:3} for an illustration. 

Here we work under the hypothesis that $\overrightarrow\nu(x)$ is slowly varying at zero, that is
  $\lim_{x\to 0}\overrightarrow\nu(\lambda x)/\overrightarrow\nu(x)=1$
for all $\lambda>0$. According to 
%\cite[Theorem 4, Section 5, Ch 13]{Fel:71}, see also
\cite[Theorems 1.7.1' and 1.7.6]{Bin:Gol:Teu:87} (see also \cite[Proposition 19]{Gne:Han:Pit:07}), 
  $\Phi(1/x)\sim\overrightarrow\nu(x)$
as $x\to0$, so that via \eqref{eq:Poissonization} \begin{equation*}%\label{eq:Karlin}
  \E(K_n)\sim 
  \overrightarrow\nu\big(\Small{\frac{1}{n}})
  \quad\mbox{as }n\to\infty,
\end{equation*}  
cf. \cite[Theorem 1']{Kar:67}. In Theorem \ref{th:2nd-order} we derive a two term expansion of $\E(K_n)$ under the hypothesis that $\overrightarrow\nu(x)$ is a de Haan slowly varying function at zero, that is for a constant $c$ and a slowly varying function $\ell(x)$ at zero, called the auxiliary function of $\overrightarrow\nu(x)$, 
\begin{equation}\label{eq:deHaan}
  \frac{\overrightarrow\nu(\lambda x)-\overrightarrow\nu(x)}
  {\ell(x)}\to c\log \lambda,
  \quad\text{as }x\to0.
\end{equation}  
%
%% Theorem
%
\begin{theorem}\label{th:2nd-order}
If $\ell(x)$ is slowly varying at zero and $c\geq 0$ satisfy \eqref{eq:deHaan} for all $\lambda>0$, then
\begin{equation*}%\label{eq:2nd-order}
  \E(K_n)=\overrightarrow\nu(1/n)-c\gamma\ell(1/n)+o(\ell(1/n)),
  \quad \text{as }n\to \infty
\end{equation*}  
where $\gamma$ is the Euler-Mascheroni constant. 
\end{theorem}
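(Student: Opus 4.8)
\emph{Proof strategy.}\quad The plan is to prove the two-term expansion first for the Poissonized mean $\Phi(n)$ and then transfer it to $\E(K_n)$ via \eqref{eq:Poissonization}. Because $\Phi(n)\sim\overrightarrow\nu(1/n)$ is slowly varying at $\infty$ whereas $n\,\ell(1/n)$ is regularly varying of index $1$, the Poissonization error satisfies $|\E(K_n)-\Phi(n)|\le (2/n)\Phi(n)=o(\ell(1/n))$, so it suffices to handle $\Phi(n)$. Starting from $\Phi(t)=t\int_0^1\edr^{-tx}\,\overrightarrow\nu(x)\,\ddr x$ and substituting $u=tx$, I would obtain $\Phi(t)=\int_0^t\edr^{-u}\,\overrightarrow\nu(u/t)\,\ddr u$. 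Writing $\overrightarrow\nu(u/t)=\overrightarrow\nu(1/t)+\bigl[\overrightarrow\nu(u/t)-\overrightarrow\nu(1/t)\bigr]$ splits this into $\overrightarrow\nu(1/t)(1-\edr^{-t})$ plus a remainder; the first piece is $\overrightarrow\nu(1/t)+o(\ell(1/t))$, since $\edr^{-t}$ decays faster than the reciprocal of any slowly varying function. Thus the whole problem reduces to showing
\[
  R(t):=\int_0^t\edr^{-u}\bigl[\overrightarrow\nu(u/t)-\overrightarrow\nu(1/t)\bigr]\,\ddr u=-c\gamma\,\ell(1/t)+o(\ell(1/t)).
\]

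For the remainder I would divide by $\ell(1/t)$ and show the ratio tends to $c\int_0^\infty\edr^{-u}\log u\,\ddr u$. The pointwise input is the de Haan property \eqref{eq:deHaan}: applied with $x=1/t\to0$ and $\lambda=u$ it gives $\bigl[\overrightarrow\nu(u/t)-\overrightarrow\nu(1/t)\bigr]/\ell(1/t)\to c\log u$ for each fixed $u>0$, so the integrand of $R(t)/\ell(1/t)$ converges pointwise to $c\,\edr^{-u}\log u$ on $(0,\infty)$, while the range $(0,t)$ increases to $(0,\infty)$. Since $\int_0^\infty\edr^{-u}\log u\,\ddr u=\Gamma'(1)=-\gamma$, this pins the limit as $-c\gamma$, which is exactly the constant in the statement.

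The hard part will be justifying the interchange of limit and integral: the convergence in \eqref{eq:deHaan} is not uniform and $\log u$ is unbounded both near $0$ and near $\infty$, so a domination argument is needed. I would invoke the Potter-type (uniform) bound for de Haan's class $\Pi$, cf.\ \cite[Section 3.6]{Bin:Gol:Teu:87}: for any $\delta\in(0,1)$ there is $t_0$ such that for all $t\ge t_0$ and all $u\in(0,t)$,
\[
  \left|\frac{\overrightarrow\nu(u/t)-\overrightarrow\nu(1/t)}{\ell(1/t)}\right|\le c\,|\log u|+\delta\bigl(u^{\delta}+u^{-\delta}\bigr).
\]
The majorant $\edr^{-u}\bigl(c|\log u|+\delta(u^{\delta}+u^{-\delta})\bigr)$ is integrable on $(0,\infty)$ when $\delta<1$ (the exponential controls the upper end, the exponent $-\delta>-1$ the lower end), so dominated convergence gives $R(t)/\ell(1/t)\to-c\gamma$, the tail $\int_t^\infty$ of the majorant vanishing as $t\to\infty$. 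Assembling the three pieces yields $\Phi(n)=\overrightarrow\nu(1/n)-c\gamma\,\ell(1/n)+o(\ell(1/n))$, and then \eqref{eq:Poissonization} delivers the theorem. The remaining bookkeeping is routine: transferring the $\Pi$-Potter bound of \cite{Bin:Gol:Teu:87}, normally stated at $\infty$, to the behavior at $0$ through $x\mapsto1/x$, checking the elementary identity $\int_0^\infty\edr^{-u}\log u\,\ddr u=-\gamma$, and (in the random-frequency case) replacing $\overrightarrow\nu$ by the right tail of the mean measure and using Fubini, as indicated before the statement.
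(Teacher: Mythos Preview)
Your proposal is correct and follows essentially the same route as the paper: reduce to $\Phi$ via \eqref{eq:Poissonization}, change variables in $\Phi(t)=t\int_0^1\edr^{-tx}\overrightarrow\nu(x)\,\ddr x$, subtract $\overrightarrow\nu(1/t)$, divide by $\ell(1/t)$, and pass to the limit using the de Haan hypothesis together with the global (Potter-type) bounds for the class $\Pi$ from \cite{Bin:Gol:Teu:87} to justify dominated convergence. The paper's write-up is marginally more compact---it integrates over $(0,\infty)$ from the outset (since $\overrightarrow\nu$ vanishes beyond $1$) and uses $\int_0^\infty\edr^{-\lambda}\,\ddr\lambda=1$ rather than isolating the factor $(1-\edr^{-t})$---and it cites Theorem~3.8.6 of \cite{Bin:Gol:Teu:87} specifically for the uniform bound, but these are cosmetic differences.
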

The proof consists in an adaptation  to the present setting of \cite[Theorem 3.9.1]{Bin:Gol:Teu:87} for the study of the remainder of Tauberian theorem, $\Phi(1/x)-\overrightarrow\nu(x)$, as $x\to 0$, combined with an application of \eqref{eq:Poissonization}. The proof is reported in the \hyperref[appn]{Appendix}.
In order to apply this result, one needs to establish the variation of $\overrightarrow\nu(x)$ at $0$, so some explicit or at least tractable form of $\overrightarrow\nu(x)$ is in order. In the next two sections we apply the asymptotic expansion of Theorem \ref{th:2nd-order} to species sampling priors that features stochastically decreasing frequencies for which $\overrightarrow\nu(x)$ is tractable enough.

%%%%%%%%%%%%%%%%%%%%%%%%%%%%%%%%%%%%%%%
\section{Geometric stick-breaking process}
\label{sec:3}
%%%%%%%%%%%%%%%%%%%%%%%%%%%%%%%%%%%%%%%

The geometric stick breaking process is a species sampling model with random frequencies $(w_j)_{j\geq1}$ of geometric type,
\begin{equation*}
%  p\sim\pi(p),\quad
  w_j=p(1-p)^{j-1},\quad j=1,2,\ldots
\end{equation*}
with random success probability $p$. 
The number of frequencies $w_j$ not smaller than $x$,  
  $\max\{j:\ p(1-p)^{j-1}\geq x\}$,
can be explicitly found as the solution in $j$ to the equation $p(1-p)^{j-1}=x$. By direct calculation
\begin{equation*}%\label{eq:vecnu}
  \overrightarrow\nu(x,p)=\bigg\lfloor 
  \frac{\log(x/p)}{\log (1-p)}+1\bigg\rfloor\,
  \Indic_{(p\geq x)},
\end{equation*}  
where the notation $\overrightarrow\nu(x,p)$  makes the dependence on $p$ explicit. The case of fixed $p$ provides an illustration of Theorem \ref{th:2nd-order}.
%
%% Example
%
\begin{example}
Let $K_n$ be the number of distinct values among $n$ iid draws from the geometric distribution with success probability $p$. Accurate formulae for the mean and the variance of $K_n$ are given in \cite{Arc:etal:06}. 
Since $\overrightarrow\nu(x,p)\sim \log x/\log(1-p)$ as $x\to 0$, $\overrightarrow\nu(x,p)$ is a de Haan slowly varying function with auxiliary function $\ell(x)=1$ and $c=1/\log(1-p)$, cf. \eqref{eq:deHaan}. Hence Theorem \ref{th:2nd-order} yields
\begin{equation*}
  \E(K_n)=
  \bigg\lfloor\frac{\log (np)}{|\log(1-p)|}
  +1\bigg\rfloor
  +\frac{\gamma}{|\log(1-p)|}+o(1)
  \quad\mbox{as }n\to\infty,
\end{equation*}  
in accordance with the expansion of \cite[Theorem 1]{Arc:etal:06}.
\end{example}
Now return to the random case with $\pi(p)$ on $(0,1)$ denoting the (prior) distribution of the success probability $p$ in \eqref{eq:geom}. The results about the expected value of $K_n$ now hold with $\nu(\ddr x)$ being the mean measure of the counting measure $\sum_{j\geq 1}\delta_{w_j}$ and $\overrightarrow\nu(x)$ obtained by averaging the number of frequencies $w_j$ not smaller than $x$ with respect to $\pi(p)$:
  $$\overrightarrow\nu(x)
  =\int_0^1\overrightarrow\nu(x,p)\pi(p)\ddr p.$$
In the sequel it is convenient to work with 
\begin{equation*}
  m(x)
  =\int_x^{1}\frac{\log x-\log p}{\log(1-p)}
  \pi(p)\ddr p,
%  =\int_0^{1-x}T(x,1-y)\ddr y
\end{equation*}  
since
  $m(x)\leq \overrightarrow\nu(x)
  %\leq m(x)+\int_x^1\pi(p)\ddr p
  \leq m(x)+1$. 
The variation of $\overrightarrow\nu(x)$ in zero can then be studied in terms of $m(x)$.
By the change of variable $t=\log 1/p$,
\begin{equation}\label{eq:m(x)}
  m(x)
  %=\int_0^{\log 1/x}
  %\frac{t-\log 1/x}{\log(1-\edr^{-t})}\edr^{-t}\ddr t
  =\int_0^{\log 1/x}
  \bigg(\log \frac{1}{x} - t\bigg)
  \pi(\edr^{-t})\fprime(t)\ddr t,\quad
  \fprime(t)
  =\frac{\edr^{-t}}{-\log(1-\edr^{-t})}
\end{equation}   
Properties of $\fprime(t)$ in \eqref{eq:m(x)} are collected in Lemma \ref{lemma:fprime}, whose proof is deferred to the \hyperref[appn]{Appendix}.
%
%% Lemma
%
\begin{lemma}\label{lemma:fprime}
The function $\fprime(t)$ defined in \eqref{eq:m(x)} is nondecreasing on $\R_+$ with $\lim_{t\to 0}\fprime(t)=0$, $\lim_{t\to\infty}\fprime(t)=1$ and 
  $1-\fprime(t)\sim\edr^{-t}/2$ as $t\to\infty$.
Moreover
  $\int_0^\infty(1-\fprime(t))\ddr t=\gamma$, 
with $\gamma=-\Gamma'(1)-\int_0^\infty(\log x)\edr^{-x}\ddr x$ the Euler-Mascheroni constant.  
\end{lemma}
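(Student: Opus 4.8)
The plan is to substitute $u=\edr^{-t}\in(0,1)$ and work with $g(u):=u/(-\log(1-u))$, so that $\fprime(t)=g(\edr^{-t})$. Everything is read off from the expansion $1/g(u)=\sum_{k\ge 0}u^{k}/(k+1)$, obtained by dividing the series $-\log(1-u)=\sum_{k\ge 1}u^{k}/k$ by $u$.

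\emph{Monotonicity and limits.} Since $1/g$ is a power series with strictly positive coefficients, it is strictly increasing on $[0,1)$, from the value $1$ at $u=0$ to $+\infty$ as $u\to1$ (the harmonic series diverges). Hence $g$ is strictly decreasing on $(0,1)$ from $1$ to $0$; as $t\mapsto\edr^{-t}$ is decreasing, $\fprime(t)=g(\edr^{-t})$ is strictly increasing on $\R_+$, in particular nondecreasing, with $\fprime(0^{+})=g(1^{-})=0$ and $\fprime(\infty)=g(0^{+})=1$. For the tail, write, with $u=\edr^{-t}$,
\[
  1-\fprime(t)=1-g(u)=\frac{1/g(u)-1}{1/g(u)}
  =\frac{u/2+u^{2}/3+\cdots}{1+u/2+u^{2}/3+\cdots}.
\]
As $t\to\infty$ the numerator is $\sim u/2$ and the denominator $\to1$, so $1-\fprime(t)\sim\edr^{-t}/2$. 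This also shows $0\le 1-\fprime(t)\le 1$ and that $1-\fprime$ is integrable near $+\infty$, so $\int_{0}^{\infty}(1-\fprime(t))\,\ddr t$ converges.

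\emph{The integral.} The substitution $u=\edr^{-t}$ gives $\int_{0}^{\infty}(1-\fprime(t))\,\ddr t=\int_{0}^{1}\big(u^{-1}-(-\log(1-u))^{-1}\big)\,\ddr u$, and then $v=1-u$ turns this into $\int_{0}^{1}\big((1-v)^{-1}+(\log v)^{-1}\big)\,\ddr v$, a classical representation of the Euler--Mascheroni constant. To make it self-contained I would truncate: expanding the geometric series gives $\int_{0}^{1}(1-v^{n})/(1-v)\,\ddr v=H_{n}$, the $n$th harmonic number, while writing $(v^{a}-v^{b})/\log v=\int_{b}^{a}v^{s}\,\ddr s$ and applying Tonelli gives $\int_{0}^{1}(v^{a}-v^{b})/\log v\,\ddr v=\log\frac{a+1}{b+1}$, hence $\int_{0}^{1}(1-v^{n})/\log v\,\ddr v=-\log(n+1)$. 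Adding, $\int_{0}^{1}(1-v^{n})\big((1-v)^{-1}+(\log v)^{-1}\big)\,\ddr v=H_{n}-\log(n+1)\to\gamma$. Since the bracket is strictly positive on $(0,1)$ (because $-\log(1-w)>w$ for $w\in(0,1)$) and $1-v^{n}$ increases to $1$, monotone convergence lets me pass to the limit under the integral, yielding $\int_{0}^{1}\big((1-v)^{-1}+(\log v)^{-1}\big)\,\ddr v=\gamma$, which is the stated identity.

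\emph{Main obstacle.} The only genuinely delicate point is the integral evaluation: one must justify the interchange in $\int_{0}^{1}(v^{a}-v^{b})/\log v\,\ddr v$ and the passage to the limit in $n$. Both are routine once one notes that all relevant integrands are nonnegative and that the bracket $(1-v)^{-1}+(\log v)^{-1}$ extends continuously to $[0,1]$, with boundary values $1$ and $\tfrac12$, hence is bounded; the remainder of the argument is bookkeeping with the logarithmic series and the relation $H_{n}-\log n\to\gamma$.
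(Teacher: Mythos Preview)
Your proof is correct. The limit and tail computations coincide with the paper's (both amount to the first two terms of the logarithmic series), but the integral evaluation follows a genuinely different route. The paper simply quotes the representation
\[
\gamma=\int_0^\infty\Big(\frac{1}{1-\edr^{-x}}-\frac{1}{x}\Big)\edr^{-x}\,\ddr x
\]
and performs the change of variable $t=-\log(1-\edr^{-x})$ to land directly on $\int_0^\infty(1-\fprime(t))\,\ddr t$. You instead reduce to $\int_0^1\big((1-v)^{-1}+(\log v)^{-1}\big)\,\ddr v$ and identify this with $\gamma$ from first principles, via the truncations $H_n-\log(n+1)$ and monotone convergence using the Frullani identity $\int_0^1(v^a-v^b)/\log v\,\ddr v=\log\frac{a+1}{b+1}$. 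Your version is longer but fully self-contained, whereas the paper's is a one-line reduction to a known formula. A further difference: you actually prove the monotonicity claim (via positivity of the coefficients in the series for $1/g$), which the paper's proof does not address.
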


The variation at zero of $m(x)$ is determined by $\fprime(t)$ and the success probability distribution $\pi(p)$. First consider $p$ uniformly distributed on the unit interval. 
%
%% Proposition Uniform prior
%
\begin{proposition}\label{prop:pier}
Let $p$ in \eqref{eq:geom} be uniformly distributed on $(0,1)$. Then
\begin{align*}
  \overrightarrow\nu(x)&=
  \frac{1}{2}\big(\log 1/x\big)^2
  -\gamma\log1/x+O(1),\quad x\to 0\\
  \E(K_n)&=\frac12 (\log n)^2
  +o(\log n),
  \quad n\to\infty.
\end{align*}  
\end{proposition}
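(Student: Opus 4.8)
The plan is to first determine the behavior of $\overrightarrow\nu(x)$ as $x\to 0$ through the function $m(x)$, and then feed the resulting de Haan expansion into Theorem~\ref{th:2nd-order}. When $p$ is uniform we have $\pi(\edr^{-t})\equiv 1$, so \eqref{eq:m(x)} reduces to $m(x)=\int_0^{y}(y-t)\fprime(t)\,\ddr t$ with $y=\log(1/x)$, a fractional integral of order one in the sense of the Notation. First I would write $\fprime(t)=1-(1-\fprime(t))$ and split $m(x)=y^2/2-\int_0^{y}(y-t)(1-\fprime(t))\,\ddr t$. By Lemma~\ref{lemma:fprime} the function $1-\fprime(t)$ is nonnegative, decays like $\edr^{-t}/2$ at infinity, and satisfies $\int_0^\infty(1-\fprime(t))\,\ddr t=\gamma$; hence $\int_0^y(1-\fprime(t))\,\ddr t=\gamma+O(\edr^{-y})$, while $\int_0^y t(1-\fprime(t))\,\ddr t$ converges to the finite constant $\int_0^\infty t(1-\fprime(t))\,\ddr t$ with an $O(y\edr^{-y})$ error. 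Combining these gives $\int_0^{y}(y-t)(1-\fprime(t))\,\ddr t=\gamma y+O(1)$, so $m(x)=\frac{1}{2}(\log 1/x)^2-\gamma\log(1/x)+O(1)$; since $m(x)\le\overrightarrow\nu(x)\le m(x)+1$, the same expansion holds for $\overrightarrow\nu(x)$, which is the first displayed formula.

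For the second formula, I would observe that this expansion makes $\overrightarrow\nu$ de Haan slowly varying at zero: writing $\log(1/(\lambda x))=\log(1/x)-\log\lambda$ and expanding the square, the $O(1)$ remainders wash out after division by $\ell(x)=\log(1/x)$, and \eqref{eq:deHaan} holds with auxiliary function $\ell(x)=\log(1/x)$ and $c=-1$ (as in the Example). Theorem~\ref{th:2nd-order} then gives $\E(K_n)=\overrightarrow\nu(1/n)+\gamma\log n+o(\log n)$, and substituting $\overrightarrow\nu(1/n)=\frac{1}{2}(\log n)^2-\gamma\log n+O(1)$ makes the two $\gamma\log n$ terms cancel, leaving $\E(K_n)=\frac{1}{2}(\log n)^2+o(\log n)$.

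No individual step is difficult, but the cancellation in the last line is the crux of the argument: it occurs precisely because the coefficient of $\log(1/x)$ in the expansion of $\overrightarrow\nu$ equals $\int_0^\infty(1-\fprime(t))\,\ddr t=\gamma$, the same constant that reappears, as the Euler--Mascheroni constant, in the second-order term of Theorem~\ref{th:2nd-order}; so the decisive input is the identity $\int_0^\infty(1-\fprime(t))\,\ddr t=\gamma$ from Lemma~\ref{lemma:fprime}, everything else being control of exponentially small tails. A self-contained alternative that avoids Theorem~\ref{th:2nd-order} is to insert the expansion of $\overrightarrow\nu$ into $\Phi(t)=t\int_0^1\edr^{-tx}\overrightarrow\nu(x)\,\ddr x$, evaluate the resulting integrals via the substitution $u=tx$ together with $\int_0^\infty\edr^{-u}\log u\,\ddr u=-\gamma$ and $\int_0^\infty\edr^{-u}(\log u)^2\,\ddr u=\gamma^2+\pi^2/6$ to obtain $\Phi(t)=\frac{1}{2}(\log t)^2+O(1)$, and then apply the Poissonization bound \eqref{eq:Poissonization}.
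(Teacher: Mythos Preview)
Your proof is correct and follows essentially the same route as the paper: both reduce $m(x)$ to the fractional integral $\int_0^{y}(y-t)\fprime(t)\,\ddr t$ with $y=\log(1/x)$, extract the expansion $\tfrac12 y^2-\gamma y+O(1)$ from Lemma~\ref{lemma:fprime}, and then feed the resulting de Haan relation into Theorem~\ref{th:2nd-order} so that the two $\gamma\log n$ terms cancel. The only cosmetic difference is that the paper passes through the primitive $F$ and the identity ${}_1F(x)=\int_0^x F(t)\,\ddr t$ together with $F(x)=x-\gamma+O(\edr^{-x})$, whereas you split $\fprime=1-(1-\fprime)$ directly; your closing direct computation of $\Phi(t)$ is a correct self-contained alternative that the paper does not pursue.
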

%
%% Proof of Proposition 1
%
\begin{proof}
For $\pi(p)=\Indic_{(0,1)}(p)$, $m(x)$ in \eqref{eq:m(x)} is given by
  $$m(x)=\int_0^{\log 1/x}
  (\log 1/x -t)\fprime(t)\ddr t
  ={}_1F(\log1/x),$$
where $F(t)=\int_0^t\fprime(s)\ddr s$  and 
  ${}_1F(x)=\int_0^x(x-t)\fprime(t)\ddr t$
is the fractional integral of order one of $F$. To prove the first statement,  it is sufficient to prove it for $m(x)$ in place of $\overrightarrow\nu(x)$. Integrating by parts,
  ${}_1F(x)=\int_0^xF(t)\ddr t$.
Hence, since $\log1/x\to\infty$ as $x\to 0$, we derive an asymptotic expansion of $F(x)$ as $x\to\infty$. According to Lemma \ref{lemma:fprime}, $\fprime(x)$ is a distribution function on $\R_+$. 
Moreover, given 
  $1-\fprime(t)\sim \edr^{-t}/2$ as $t\to\infty$,
the distribution function $\fprime(x)$ has moments of any order and, in particular, the first moment is equal to the Euler-Mascheroni constant $\gamma$. 
Then, $F(x)$ is regularly varying at infinity with exponent $\beta=1$ and, as $x\to \infty$, 
\begin{equation}\label{eq:F(x)}
  F(x)=x-\int_0^x(1-\fprime(t))\ddr t
  =x-\gamma+\int_x^\infty(1-\fprime(t))\ddr t
  =x-\gamma+O(\edr^{-x}).
\end{equation}  
Computing $\int_0^x F(t)\ddr t$ with the asymptotic expansion $F(x)\sim x-\gamma+O(\edr^{-x})$ leads to
  $${}_1F(x)
  =\int_0^xF(t)\ddr t
%  =\int_0^x\Big(t-\gamma+O(\edr^{-t})\Big)\ddr t
  =\frac{(x-\gamma)^2}2+O(1),\quad x\to\infty.$$
Substituting $x$ for $\log 1/x$ yields the first statement. In view of the application of Theorem \ref{th:2nd-order}, note that, as $x\to 0$,
\begin{align*}
  m(\lambda x)-m(x)
  &=\frac12\big(
  (\log(\lambda x))^2-(\log x)^2
  \big)
  +\gamma\big(\log (\lambda x)-\log x)+O(1)\\
  &=\frac12\big((\log x)^2+2\log\lambda\log x
  -(\log x)^2\big)+O(1)
  =\log\lambda \log x+O(1)
\end{align*}
so that, as $x\to 0$,
  $(m(\lambda x)-m(x))/\log x\to \log\lambda$.
Hence $\overrightarrow\nu(x)$ is a de Haan slowly varying function at zero with auxiliary function $\ell(x)=\log x$ and $c=1$, cf. \eqref{eq:deHaan}. An application of Theorem \ref{th:2nd-order} yields the second statement.
\end{proof}
%
%% Remark
%
\begin{remark}
Using only the leading term of the expansion of $m(x)$ in the application of Theorem \ref{th:2nd-order}, we would get the second order term in the asymptotic expansion of $\E(K_n)$ wrong, i.e. differing by $\gamma\log n$. Hence, in this case, an application of Karamata's Theorem to the evaluation of $\int_0^x (x-t)\fprime(t)\ddr t$ would be not precise enough, as the latter would yield
  $\int_0^x (x-t)\fprime(t)\ddr t
  \sim\frac12xF(x)$
and, in turn,
  $m(x)\sim\frac12(\log\frac1x)^2$.
\end{remark}

%
%% Non uniform prior
%
Next we tackle the case of the success probability distribution $\pi(p)$ chosen such that the integrand in \eqref{eq:m(x)} behaves like $t^m\fprime(t)$ for $m$ a positive integer. This is obtained by setting $p=\edr^{-X}$ for $X\sim\text{ga}(m+1,1)$, a gamma distributed random variable with shape $m+1$ and unit rate. Note $m=0$ yields the uniform distribution considered in Proposition \ref{prop:pier}. As detailed in Theorem \ref{th:pier2} below, this choice makes $\E(K_n)$ grow as a power of $\log n$ with exponent determined by $m$. Before that, we first provide an illustration of how the arguments used in Proposition \ref{prop:pier} can be adapted to the case $m=1$, paving the way for the techniques used in the general case.
%
%% Example
%
\begin{example}\label{example:1}
By direct calculation, the density function of $p\stackrel{d}{=}\edr^{-X}$, for $X\sim\text{ga}(2,1)$, is $\pi(p)=-\log p$. Then $m(x)$ in \eqref{eq:m(x)} is given by 
  $$m(x)=\int_0^{\log 1/x}(\log 1/x-t)t\fprime(t)\ddr t  
  =\int_0^{\log 1/x}\int_0^ts\fprime(s)\ddr s\,\ddr t.$$
where the second equality follows by integration by parts. We first derive an asymptotic expansion for $\int_0^xt\fprime(t)\ddr t$ as $x\to \infty$. Since
  $$\int_0^x t\fprime(t)\ddr t
  =xF(x)-\int_0^x F(t)\ddr t
  =xF(x)-{}_1F(x)$$
using the asymptotic expansion 
  $F(x)=x-\gamma+O(\edr^{-x})$
in \eqref{eq:F(x)} 
%and
%  ${}_1F(x)
%  =(x-\gamma)^2/2+O(1)$  
we find
  $$\int_0^x t\fprime(t)\ddr t
  =x(x-\gamma)+O(x\edr^{-x})
  -\frac{(x-\gamma)^2}2+O(1)
  =\frac{x^2}2+O(1)$$
so that
  $$
  %\int_0^x(x-t)t\fprime(t)\ddr t=
  \int_0^x\int_0^ts\fprime(s)\ddr s\,\ddr t
  =\int_0^x\bigg(\frac{t^2}2+O(1)\bigg)\ddr x
  =\frac{x^3}6+O(x)
  $$
and, in turn,
  $$m(x)=\frac16\Big(\log\frac1x\Big)^3
  +O(\log x).$$    
We look now for the auxiliary function $\ell(x)$ of the slowly varying function $m(x)$. We have
\begin{align*}
  6\big(m(\lambda x)-m(x)\big)
  &=-(\log x+\log\lambda)^3+(\log x)^3 
  +O(\log x)\\
  &=-3\log\lambda(\log x)^2+O(\log x)
\end{align*}
so that, as $x\to 0$
  $$\frac{m(\lambda x)-m(x)}
  {(\log x)^2}
  \to -\frac12\log\lambda.$$
Hence, the auxiliary function of $\overrightarrow\nu(x)$ is found to be
  $\ell(x)=(\log x)^2$
with $c=-\frac12$, cf. \eqref{eq:deHaan}. Note that, because of the cancellation of the $(\log 1/x)^2$ term in the expansion of $m(x)$ for $x\to\infty$, the derivation of $\ell(x)$ only requires the leading term of $m(x)$. The latter can be alternatively obtained by using regular variation theory. Since
  $\int_0^xt\fprime(t)\ddr t$
is regularly varying at infinity with index $2$, Karamata's theorem yields
  $\int_0^x(x-t)t\fprime(t)\ddr t\big/
  x\int_0^x t\fprime(t)\ddr t
  \to\frac13$
as $x\to\infty$. A second application of Karamata's Theorem yields
  $\int_0^x t\fprime(t)\ddr t\big/
  x^2\fprime(x)\to\frac12$
to conclude that, as $x\to\infty$,
  $$\frac{\int_0^x(x-t)t\fprime(t)\ddr t}
  {x^3\fprime(x)}
  \to\frac12\frac13=\frac16$$
Since $\fprime(x)\to 1$ as $x\to\infty$, we get $m(x)\sim\frac16(\log 1/x)^3$.
Finally, by applying Theorem \ref{th:2nd-order} we conclude that
\begin{equation*}%\label{eq:EKn_m=1}
  \E(K_n)=\frac16 (\log n)^3
  +\frac12 \gamma(\log n)^2+o(\log^2 n)
\end{equation*}  
It is worth stressing that $\pi(p)=-\log p$ yields $\E(p)=1/4$, i.e. a mass shift to lower values of $p$ compared to $\pi(p)=1$. This, according to $\overrightarrow\nu(x,p)\sim \log x/\log(1-p)$, favors larger values of $\E(K_n|p)$, which explain a faster growth of $\E(K_n)$.
\end{example}
The asymptotic expansion of $\E(K_n)$, for $m$ any positive integer, is derived in the following theorem. The key ingredient consists in expressing $\int_0^x t^m \fprime(t)\ddr t$ in terms of fractional integrals of $F(x)$.
%
%% Theorem
%
\begin{theorem}\label{th:pier2}
Let $p$ in \eqref{eq:geom} have distribution $\pi(p)$ defined by $p\stackrel{d}{=}\edr^{-X}$ with $X\sim\text{\rm ga}(m+1,1)$ and $m$ a positive integer. Then
\begin{align*}
  \overrightarrow\nu(x)
  &=\frac{(\log 1/x)^{m+2}}{(m+2)!}
  +O\big((\log x)^m),\quad x\to 0\\
  \E(K_n)
  &=\frac{(\log n)^{m+2}}{(m+2)!}
  +\gamma\frac{(\log n)^{m+1}}{(m+1)!}
  +o((\log n)^{m+1}),\quad n\to\infty.
\end{align*}
\end{theorem}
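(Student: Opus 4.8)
The plan is to push the fractional-integral bookkeeping of Proposition~\ref{prop:pier} and Example~\ref{example:1} to general $m$, producing an expansion of $m(x)$ from \eqref{eq:m(x)} accurate to two orders as $x\to0$. First I would identify the integrand: since $p\stackrel{d}{=}\edr^{-X}$ with $X\sim\text{ga}(m+1,1)$, the change of variable $p\mapsto\log 1/p$ shows the density at $\edr^{-t}$ is $\pi(\edr^{-t})=t^m/m!$, so that, writing $G_m(u)=\int_0^u t^m\fprime(t)\,\ddr t$,
\begin{equation*}
  m(x)=\frac{1}{m!}\int_0^{\log 1/x}\big(\log 1/x-t\big)\,t^m\fprime(t)\,\ddr t
       =\frac{1}{m!}\int_0^{\log 1/x}G_m(s)\,\ddr s ,
\end{equation*}
the second equality by Fubini. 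The whole problem thus reduces to expanding $G_m(u)$ as $u\to\infty$ and integrating once more.

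The key step, as indicated in the statement, is to express $G_m$ through fractional integrals of $F(t)=\int_0^t\fprime(s)\,\ddr s$. Using $t^m=\sum_{k=0}^m\binom mk(-1)^k u^{m-k}(u-t)^k$,
\begin{equation*}
  G_m(u)=\sum_{k=0}^m\binom mk(-1)^k u^{m-k}\,k!\;{}_kF(u),\qquad
  {}_kF(u)=\frac{1}{k!}\int_0^u(u-t)^k\fprime(t)\,\ddr t ,
\end{equation*}
with ${}_kF$ the $k$-fold iterated integral of $F$. From $F(x)=x-\gamma+O(\edr^{-x})$, already obtained in \eqref{eq:F(x)} via Lemma~\ref{lemma:fprime}, iterating the integration gives
\begin{equation*}
  {}_kF(x)=\frac{x^{k+1}}{(k+1)!}-\gamma\,\frac{x^{k}}{k!}+O(x^{k-1}),\qquad x\to\infty ,
\end{equation*}
the $O(\edr^{-x})$ remainder in $F$ becoming $O(x^{k-1})$ after $k$ integrations. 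Hence $u^{m-k}k!\,{}_kF(u)=u^{m+1}/(k+1)-\gamma u^m+O(u^{m-1})$, and on summing the order-$u^m$ terms cancel, since $\sum_{k=0}^m\binom mk(-1)^k=0$ for $m\ge1$, while the leading coefficient is $\sum_{k=0}^m\binom mk(-1)^k/(k+1)=\int_0^1(1-t)^m\,\ddr t=1/(m+1)$. So $G_m(u)=u^{m+1}/(m+1)+O(u^{m-1})$, and a final integration yields $m(x)=(\log 1/x)^{m+2}/(m+2)!+O((\log x)^m)$; the first asymptotics follows from $m(x)\le\overrightarrow\nu(x)\le m(x)+1$.

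For $\E(K_n)$ I would read off the de Haan structure of $\overrightarrow\nu$ exactly as at the close of the proof of Proposition~\ref{prop:pier}. Substituting $\log 1/(\lambda x)=\log 1/x-\log\lambda$ into the expansion and expanding the $(m+2)$th power gives, as $x\to0$,
\begin{align*}
  \overrightarrow\nu(\lambda x)-\overrightarrow\nu(x)
  &=\frac{(\log 1/x-\log\lambda)^{m+2}-(\log 1/x)^{m+2}}{(m+2)!}+O\big((\log x)^m\big)\\
  &=-\frac{(\log 1/x)^{m+1}}{(m+1)!}\,\log\lambda+o\big((\log x)^{m+1}\big) ,
\end{align*}
so \eqref{eq:deHaan} holds with auxiliary function $\ell(x)=(\log x)^{m+1}$, slowly varying at $0$, and $c=(-1)^m/(m+1)!$ (agreeing with $m=1$ in Example~\ref{example:1}). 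Theorem~\ref{th:2nd-order} then gives $\E(K_n)=\overrightarrow\nu(1/n)-c\gamma\,\ell(1/n)+o(\ell(1/n))$; since $\ell(1/n)=(-1)^{m+1}(\log n)^{m+1}$ one has $-c\gamma\,\ell(1/n)=\gamma(\log n)^{m+1}/(m+1)!$, and combining with $\overrightarrow\nu(1/n)=(\log n)^{m+2}/(m+2)!+O((\log n)^m)$ produces exactly the asserted expansion.

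The step I expect to be the main obstacle is the second-order, rather than merely leading-order, control of the fractional integrals ${}_kF$: one must track how the exponentially small remainder in $F(x)=x-\gamma+O(\edr^{-x})$ propagates to an $O(x^{k-1})$ term after $k$ iterated integrations, and dovetail this with the two combinatorial identities above. It is precisely this bookkeeping that makes the $(\log 1/x)^{m+1}$ contribution cancel in $\overrightarrow\nu$ and re-emerge in $\E(K_n)$ with the correct constant $\gamma/(m+1)!$, which is supplied entirely by the Tauberian remainder of Theorem~\ref{th:2nd-order} and not by $\overrightarrow\nu$ itself; a lesser, purely clerical, hazard is sign-tracking in \eqref{eq:deHaan} when passing between $\log 1/x$ and $\log x$.
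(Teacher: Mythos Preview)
Your proposal is correct and follows essentially the same route as the paper: express $\int_0^u t^m\fprime(t)\,\ddr t$ as an alternating sum of $u^{m-k}\,{}_kF(u)$, feed in ${}_kF(x)=x^{k+1}/(k+1)!-\gamma x^k/k!+O(x^{k-1})$ from \eqref{eq:F(x)}, observe that the $u^m$ layer cancels by $\sum_k\binom mk(-1)^k=0$, integrate once more, and finish with Theorem~\ref{th:2nd-order}. The only cosmetic differences are that you obtain the fractional-integral decomposition via the binomial expansion $t^m=\sum_k\binom mk(-1)^ku^{m-k}(u-t)^k$ rather than repeated integration by parts, and you evaluate $\sum_k\binom mk(-1)^k/(k+1)$ as $\int_0^1(1-t)^m\,\ddr t$ rather than by an index shift---both yielding the same formulas the paper uses.
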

%
%% Proof of Theorem 2
%
\begin{proof}
Since $\pi(p)=(-\log p)^m/m!$, $m(x)$ in \eqref{eq:m(x)} becomes
\begin{equation}\label{eq:m(x)4}
  m(x)
  =\int_0^{\log 1/x}(\log 1/x-t)
  \frac{t^m}{m!}\fprime(t)\ddr t 
  =\int_0^{\log 1/x}\int_0^t
  \frac{s^m}{m!}\fprime(s)\ddr s\, \ddr t.
\end{equation}  
where the second equality follows again by integration by parts. Recall that, for an integer-valued index, fractional integrals correspond to \comillas{higher order} primitives of $\fprime(x)$:
  $F(x)={}_0F(x)$,
  ${}_1F(x)=\int_0^xF(t)\ddr t$
and  
  $${}_{(k+1)}F(x)=\int_0^x(x-t)\ddr\, {}_kF(t)
  =\int_0^x{}_kF(t)\ddr t,\quad k=1,2,\ldots$$
By repeated integration by parts the inner integral in \eqref{eq:m(x)4} is found to be
\begin{align*}
  \int_0^x
  \frac{t^m}{m!}\fprime(t)\ddr t
  &=\sum_{k=0}^m
  \frac{(-1)^k}{(m-k)!}x^{m-k}{}_kF(x).
\end{align*}
Next, we exploit the asymptotic evaluation \eqref{eq:F(x)}, $F(x)=x-\gamma+O(\edr^{-x})$ as $x\to\infty$, to find
\begin{align*}
  {}_1F(x)&=\frac{(x-\gamma)^2}2+O(1)
  =\frac{x^2-2\gamma x}2+O(1)\\
  {}_2F(x)&=\frac{(x-\gamma)^3}{3!}+O(x)
  =\frac{x^3-3\gamma x^2}{3!}+O(x)\\
  {}_kF(x)&=\frac{(x-\gamma)^{k+1}}{(k+1)!}+O(x^{k-1})
  =\frac{x^{k+1}-(k+1)\gamma x^k}{(k+1)!}+O(x^{k-1}).
\end{align*}
We get
\begin{align*}
  \int_0^x\frac{t^m}{m!}\fprime(t)\ddr t
  &=\sum_{k=0}^m\frac{(-1)^k}{(m-k)!}
  x^{m-k}\bigg(
  \frac{x^{k+1}-(k+1)\gamma x^k}{(k+1)!}+O(x^{k-1})
  \bigg)\\
  &=x^{m+1}\sum_{k=0}^m\frac{(-1)^k}{(m-k)!(k+1)!}
  +\gamma x^m\sum_{k=0}^m\frac{(-1)^{k+1}}{(m-k)!k!}
  +O(x^{m-1}).
\end{align*}
As for the $x^{m+1}$-term we obtain
\begin{align*}
  \sum_{k=0}^m\frac{(-1)^k}{(m-k)!(k+1)!}
  &=\sum_{k=1}^{m+1}\frac{(-1)^{k-1}}{(m+1-k)!(k)!}
  =\frac1{(m+1)!}
  \sum_{k=1}^{m+1}{m+1\choose k}(-1)^{k-1}\\
  &=-\frac1{(m+1)!}
  \bigg(\sum_{k=0}^{m+1}{m+1\choose k}(-1)^{k}
  -1\bigg)
  =\frac1{(m+1)!},
\end{align*}
where in the last step we used 
  $$\sum_{k=0}^{m+1}{m+1\choose k}(-1)^{k}
  =\sum_{k=0}^{m+1}{m+1\choose k}(-1)^{k}
  (+1)^{m+1-k}
  =(-1+1)^{m+1}=0.$$
A similar application of the binomial formula shows that the $x^m$-term is zero, namely
\begin{align*}
  \sum_{k=0}^m\frac{(-1)^{k+1}}{(m-k)!k!}
  &=-\sum_{k=0}^m\frac{(-1)^{k}}{(m-k)!k!}
  =-\frac{1}{m!}(-1+1)^m=0
\end{align*}
Hence, we have
  $$\int_0^x\frac{t^m}{m!}\fprime(t)\ddr t
  =\frac{x^{m+1}}{(m+1)!}+O(x^{m-1}),$$
which yields
  $$\int_0^x\int_0^t\frac{s^m}{m!}\fprime(s)
  \ddr s\,\ddr t
  =\int_0^x \bigg(
  \frac{t^{m+1}}{(m+1)!}+O(t^{m-1})\bigg)\ddr t
  =\frac{x^{m+2}}{(m+2)!}+O(x^m)$$
to conclude that, as $x\to 0$,
  $$m(x)=\frac{(\log 1/x)^{m+2}}{(m+2)!}
  +O\big((\log x)^m\big).$$
In view of $m(x)\leq \overrightarrow\nu(x)\leq m(x)+1$, the first statement is proved. We now proceed to derive the auxiliary function $\ell(x)$ of $m(x)$ and, in turn, of $\overrightarrow\nu(x)$. We have
\begin{align*}
  (m+2)!\big(m(\lambda x)-m(x)\big)
  &=(-\log(\lambda x))^{m+2}-(-\log x)^{m+2}
  +O\big((\log x)^m\big)\\
  &=(-1)^{m+2}\big( (\log x+\log\lambda)^{m+2}
  -(\log x)^{m+2}\big)+O\big((\log x)^m\big)\\
  &=(-1)^{m+2}(m+2)\log\lambda(\log x)^{m+1}
  +O\big((\log x)^m\big)
\end{align*}
so that, as $x\to 0$,
\begin{align*}
%  \frac{m(\lambda x)-m(x)}
%  {(-1)^{m+2}(\log x)^{m+1}/(m+1)!}
%  \to \log\lambda.
  \frac{m(\lambda x)-m(x)}
  {(\log x)^{m+1}}
  \to \frac{(-1)^{m+2}}{(m+1)!}\log\lambda.
\end{align*}
So we find
  $\ell(x)=(\log(x))^{m+1}$ and
  $c=(-1)^{m+2}/(m+1)!$
in \eqref{eq:deHaan}. Note that
\begin{align*}
  c\ell(1/n)
  &=\frac{(-1)^{m+2}}{(m+1)!}(-\log n)^{m+1}
  =\frac{(-1)^{m+2+m+1}}{(m+1)!}(\log n)^{m+1}
  =-\frac{(\log n)^{m+1}}{(m+1)!}.
\end{align*}
Finally, an application of Theorem \ref{th:2nd-order} yields the second statement.
\end{proof}
%
%% Remark
%
\begin{remark}
The phenomenon observed in Example \ref{example:1} for $m=1$, namely  the cancellation of the term $(\log 1/x)^2$ in the expansion of $m(x)$, applies to any $m\geq1$ meaning that the term $(\log 1/x)^{m+1}$ cancels out. Since the auxiliary function $\ell(x)$ is found to be of the order $(\log 1/x)^{m+1}$, we conclude that for any $m\geq 1$ the leading term in the expansion of $m(x)$ is sufficient for the derivation of the second order term in the expansion of $\E(K_n)$ according to Theorem \ref{th:2nd-order}. As observed in Example \ref{example:1}, a double application of Karamata's Theorem yields the leading term of $m(x)$ as it yields, for $x\to\infty$,
  $$\frac{\int_0^x(x-t)
  t^m\fprime(t)\ddr t}
  {x^{m+2}\fprime(x)}
  \to\frac1{m+1}\frac1{m+2}.$$
These calculations can be easily extended to the case of $p\stackrel{d}{=}\edr^{-X}$ for $X\sim\text{ga}(1+\rho,1)$ with $\rho>-1$. Since $\pi(\edr^{-t})\fprime(t)$ is regularly varying at infinity with index $\rho+1>0$,
  $$\frac{\int_0^x(x-t)
  \pi(\edr^{-t})\fprime(t)\ddr t}
  {x^{\rho+2}\fprime(x)}
  \to \frac1{\Gamma(\rho+1)}
  \frac1{\rho+1}\frac1{\rho+2}
  =\frac1{\Gamma(\rho+3)}$$
so we obtain
  $$\E(K_n)\sim \frac1{\Gamma(\rho+3)}
  (\log n)^{\rho+2}.$$
However, a more accurate approximation of 
  $\int_0^x(x-t)
  \pi(\edr^{-t})\fprime(t)\ddr t$
is necessary in order to apply Theorem \ref{th:2nd-order} and obtain the second order term in the expansion of $\E(K_n)$.
\end{remark}

%%%%%%%%%%%%%%%%%%%%%%%%%%%%%%%%%%%%%%%
\section{Negative binomial extension}
\label{sec:4}
%%%%%%%%%%%%%%%%%%%%%%%%%%%%%%%%%%%%%%%

In the following we use the notation $w_j(p)$ for the $j$th frequency as a function of the parameter $p$. Recall that the asymptotic behavior of $\E(K_n)$ depends on the behavior at zero of the tail mean measure
%\begin{equation*}
  $\overrightarrow\nu(x)=\int_0^1\overrightarrow\nu(x,p)\pi(p)\ddr p,$
%\end{equation*}
where $\pi(p)$ is the success probability distribution, and
%\begin{equation*}%\label{eq:inverse}
  $\overrightarrow\nu(x,p)=\#\{j:w_j(p)\geq x\}$
%\end{equation*}
is the number of frequencies larger than a threshold $x\in[0,1]$. When the frequencies $w_j(p)$ are decreasing,  $\overrightarrow\nu(x,p)=\sup\{j:w_j(p)\geq x\}$, that is $\overrightarrow\nu(x,p)$ is obtained in terms of 
the inverse of $w_j(p)$ with respect to $j$. In the case of geometric frequencies the inverse is explicitly found to be $\log(x/p)/\log (1-p)+1$, thus we have 
  $\overrightarrow\nu(x,p)=\lfloor\log(x/p)/\log (1-p)+1\rfloor$
for $p\geq x$ or, equivalently, for $w_1(p)\geq x$. In Section \ref{sec:3}, the behavior in zero of $\overrightarrow\nu(x)$ was studied in terms of 
  $$m(x)=\int_0^1
  \frac{\log(x/p)}{\log (1-p)}
  \Indic_{(w_1(p)\geq x)}\pi(p)\ddr p,
  $$
based on the fact that
  $m(x)\leq \overrightarrow\nu(x)
  %\leq m(x)+\int_x^1\pi(p)\ddr p
  \leq m(x)+1$. 
  
In this section we consider a different model for $w_j(p)$ that can be seen as an extension of the geometric case. 
To this aim, we resort to a derivation of the geometric weights $w_j(p)=p(1-p)^{j-1}$ as a special case of a general construction of distributions on the positive integers with decreasing frequencies. Let $\phi(r;p)$ be a probability function for $r=1,2,\ldots$ with parameter $p\in(0,1)$. 
Then 
\begin{equation*}
  w_j(p)=\sum_{r\geq j}\frac{\phi(r;p)}{r},\
  \quad j=1,2,\dots,
\end{equation*}
form a decreasing sequence, $w_j(p)>w_{j+1}(p)$, of positive numbers summing up to one. As such, $(w_j(p))_{j\geq 1}$ defines a new distribution on the positive integers parametrized by $p$. 
An interesting instance of $\phi(r;p)$ is given by 
  $$\phi(r;p)=\phi(r;s,p)
  ={r+s-2\choose r-1}p^s(1-p)^{r-1},\quad r=1,2,\ldots$$ that is the negative binomial distribution shifted by one.  
The geometric frequencies are obtained by taking the  scale parameter $s=2$. In fact 
\begin{align*}
  w_j(p)&=\sum_{r\geq j}p^2(1-p)^{r-1}
  =p^2(1-p)^{j-1}\sum_{i\geq 1}(1-p)^{i-1}\\
  &=p^2(1-p)^{j-1}\sum_{i\geq 0}(1-p)^{i}
  %=p^2(1-p)^{j-1}\frac{1}{p}
  =p(1-p)^{j-1}.
\end{align*}
An explicit expression for $w_j(p)$ can be found for any integer $s\geq 2$. 
When $s=3$, 
differentiating with respect to the geometric series one finds
\begin{align*}
  \frac{2(1-p)}{p^3}w_j(p)
  &=\sum_{r\geq j}(r+1)(1-p)^{r}
%  =\sum_{s\geq j+2}(s-1)(1-p)^{s-2}\\
  =-{\ddr\over\ddr p}\sum_{r\geq j}(1-p)^{r+1}
  %\\
%  &=-{\ddr\over\ddr p}\sum_{r\geq 0}(1-p)^{r+j+1}
  =-{\ddr\over\ddr p}\frac{(1-p)^{j+1}}{p}\\
  &
  %=-\frac{-(j+1)(1-p)^{j}p-(1-p)^{j+1}}{p^2}
  =\frac{(1-p)^{j}}{p^2}\big((j+1)p+(1-p)\big)
  =\frac{(1-p)^{j}}{p^2}\big(1+jp\big)
\end{align*}
to conclude that
\begin{equation}\label{eq:wj}  
  w_j(p)=p(1-p)^{j-1}\frac{1+jp}{2},\quad j=1,2,\ldots
\end{equation}  
Similar formulae are derived for $s>3$: one finds that $w_j(p)$ is proportional to the geometric probability $p(1-p)^{j-1}$ multiplied by a polynomial in $(j,p)$ of order determined by $s$. Details are omitted. For instance, $s=4$ yields
  $$w_j(p)=p(1-p)^{j-1}
  \frac{2+2jp+jp^2+j^2p^2}{6},\quad j=1,2,\ldots$$

With a closed form expression of $w_j(p)$, an asymptotic evaluation of $\overrightarrow\nu(x,p)$ can be derived for $x\to 0$, and in turn, for $\overrightarrow\nu(x)$, so that the asymptotics of $\E(K_n)$ is obtained through Theorem \ref{th:2nd-order}. Let us restrict attention to $s=3$ with $w_j(p)$ as in \eqref{eq:wj} and $\pi(p)$ the uniform distribution. Our goal is to investigate the asymptotics of $\E(K_n)$ in comparison with the geometric case of Proposition \ref{prop:pier}. It is reasonable to expect that $\E(K_n)$ grows at a faster rate: in fact, the larger $s$, the larger the mean of the negative binomial distribution $\phi(r;s,p)$, so $w_j(p)$ decrease slower in $j$ for $s=3$ compared to $s=2$, which corresponds to the geometric case. This implies that $\overrightarrow\nu(x,p)$ grows slower as $x\to 0$ for $s=3$ and, in turn, via a Tauberian theorem $\E(K_n)$ grows faster as $n\to \infty$. In Proposition \ref{prop:pier3} we establish that the asymptotic behavior of $\E(K_n)$ differs from the one found in Proposition \ref{prop:pier} only in the second order term of the expansion.
%
%% Proposition
%
\begin{proposition}\label{prop:pier3}
Let $w_j(p)$ be defined as in \eqref{eq:wj} and $p$ be uniformly distributed on $(0,1)$.
%$\pi(p)=\Indic_{(0,1)}(p)$. 
Then
\begin{align*}
  \overrightarrow\nu(x)&=
  \frac{1}{2}\Big(\log\frac1x\Big)^2
  +\log\frac1x\log\log\frac1x
  -\gamma\log\frac1x
  -(1+\log 2)\log\frac1x
  + O\bigg(\log\log\frac1x\bigg),
  \quad x\to 0\\
  \E(K_n)
  &=\frac{1}{2}\big(\log n\big)^2
  +\log n\log(\log n)
  -(1+\log 2)\log n
  +o(\log n),\quad n\to\infty.
\end{align*}
\end{proposition}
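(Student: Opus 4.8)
The plan is to follow the strategy of Proposition~\ref{prop:pier}, the new difficulty being that inverting $j\mapsto w_j(p)$ is now transcendental because of the factor $(1+jp)/2$ in \eqref{eq:wj}. Since the frequencies \eqref{eq:wj} are strictly decreasing in $j$ and $w_1(p)=p(1+p)/2$, one has $\overrightarrow\nu(x,p)=\lfloor j^*(x,p)\rfloor$ for $p\ge p_0(x)$ and $\overrightarrow\nu(x,p)=0$ otherwise, where $p_0(x)$ is the root of $p(1+p)/2=x$, so that $p_0(x)=2x+O(x^2)$ and $\log(1/p_0(x))=\log(1/x)-\log 2+O(x)$, and $j^*(x,p)$ is the positive real solution of $w_{j^*}(p)=x$. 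Exactly as in Section~\ref{sec:3}, $|\overrightarrow\nu(x)-m(x)|\le 1$ with $m(x)=\int_{p_0(x)}^1 j^*(x,p)\,\ddr p$, so it is enough to expand $m(x)$. Writing $L=\log(1/x)$, taking logarithms in $w_{j^*}(p)=x$ and using the change of variable $t=\log(1/p)$ of \eqref{eq:m(x)}, one obtains the fixed-point identity
\begin{equation*}
  y(t)=\edr^{-t}+(L-t)\fprime(t)+\fprime(t)\log\tfrac{1+y(t)}{2},\qquad
  y(t):=j^*(x,\edr^{-t})\,\edr^{-t},
\end{equation*}
with $\fprime$ as in \eqref{eq:m(x)}, and hence, with $t_0=\log(1/p_0(x))=L-\log 2+O(x)$,
\begin{equation*}
  m(x)=\int_0^{t_0}\edr^{-t}\,\ddr t
  +\int_0^{t_0}(L-t)\fprime(t)\,\ddr t
  +\int_0^{t_0}\fprime(t)\log\tfrac{1+y(t)}{2}\,\ddr t
  =:\mathrm{I}+\mathrm{II}+\mathrm{III}.
\end{equation*}

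Here $\mathrm{I}=1-p_0(x)=O(1)$, and $\mathrm{II}={}_1F(L)-\int_{t_0}^L(L-t)\fprime(t)\,\ddr t=\tfrac12 L^2-\gamma L+O(1)$ by \eqref{eq:F(x)} (the second integral is $O(1)$ since $L-t_0=O(1)$ and $\fprime\le 1$), i.e.\ exactly the geometric contribution of Proposition~\ref{prop:pier}. The heart of the argument is $\mathrm{III}$. I would first obtain the asymptotics of $y(t)$: on the bulk range — $t$ bounded away from $0$ and from $t_0$, where $\fprime(t)=1+O(\edr^{-t})$ and $(L-t)\fprime(t)$ is large — a bootstrap in the fixed-point identity gives $y(t)=(L-t)+\log\tfrac{L-t}{2}+(\text{smaller})$ and therefore $\log\tfrac{1+y(t)}{2}=\log(L-t)-\log 2+O\!\left(\tfrac{\log(L-t)}{L-t}\right)$ there, while the two boundary layers ($t\in[0,1]$, where $y$ is at most of order $\log L$, and $t\in[t_0-C,t_0]$ for a fixed constant $C$, where $j^*\to 1$, $y$ is $O(1)$ and $y$ solves the pure equation $y-\log\tfrac{1+y}{2}=t_0-t+o(1)$) contribute only $O(\log L)$. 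Inserting this into $\mathrm{III}$ and using
\begin{equation*}
  \int_0^{t_0}\fprime(t)\log(L-t)\,\ddr t=L\log L-L-\gamma\log L+O(1),\qquad
  \int_0^{t_0}\fprime(t)\,\ddr t=F(t_0)=L-\log 2-\gamma+O(x)
\end{equation*}
(the first by writing $\log(L-t)=\log L+\log(1-t/L)$ and invoking $\int_0^\infty(1-\fprime)=\gamma$ from Lemma~\ref{lemma:fprime}, the second by \eqref{eq:F(x)}) leads to $\mathrm{III}=L\log L-(1+\log 2)L+(\text{lower order})$. Collecting the three terms and returning to $\overrightarrow\nu(x)=m(x)+O(1)$ gives the first displayed formula, with $\gamma\log(1/x)$ kept explicit.

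For the second formula I would check that $\overrightarrow\nu(x)$ is de Haan slowly varying at $0$ in the sense of \eqref{eq:deHaan}: from the expansion, with $L_\lambda=\log(1/(\lambda x))=L-\log\lambda$, one gets $\overrightarrow\nu(\lambda x)-\overrightarrow\nu(x)=-L\log\lambda+O(\log L)$, so $[\overrightarrow\nu(\lambda x)-\overrightarrow\nu(x)]/\log x\to\log\lambda$ as $x\to 0$; hence the auxiliary function is $\ell(x)=\log x$ with $c=1$, just as in Proposition~\ref{prop:pier}. Theorem~\ref{th:2nd-order} then yields $\E(K_n)=\overrightarrow\nu(1/n)-\gamma\log(1/n)+o(\log n)=\overrightarrow\nu(1/n)+\gamma\log n+o(\log n)$; substituting the expansion of $\overrightarrow\nu(1/n)$, the $-\gamma\log n$ term cancels and the remainder is $o(\log n)$, which is the claimed expansion of $\E(K_n)$.

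The main obstacle is the control of $y(t)=j^*(x,\edr^{-t})\edr^{-t}$, equivalently of $j^*(x,p)$: the defining equation is transcendental, so the bootstrap must be made uniform over the whole range $t\in(0,t_0)$, and — most delicately — the boundary layer $p\downarrow p_0(x)$ (i.e.\ $t\uparrow t_0$), where the bulk approximation $y\approx L-t$ collapses, has to be treated separately. Getting the error down to the order needed is the delicate point: the crude estimates above only yield a remainder of the larger order $(\log\log(1/x))^2$, which must be sharpened. For the $\E(K_n)$ statement, however, it suffices to reach the weaker and considerably more robust expansion $\overrightarrow\nu(x)=\tfrac12 L^2+L\log L-\gamma L-(1+\log 2)L+o(L)$, which only requires the above argument carried to $o(\log(1/x))$ precision.
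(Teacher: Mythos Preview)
Your approach is essentially the same as the paper's: both split $m(x)$ into the geometric contribution (your $\mathrm{II}$, the paper's $\int_x^1\frac{\log(x/p)}{\log(1-p)}\,\ddr p$) plus a correction coming from the extra factor $(1+jp)/2$. The paper obtains the expansion of $j^*(x,p)$ (their $m(x,p)+1$) via the Lambert $W$ function rather than a bootstrap, but it explicitly records exactly your fixed-point iteration as the heuristic behind that expansion. After the same change of variable $t=\log(1/p)$, the paper's key correction integral is
\[
  r(x)=\int_0^{x}\log\bigl(1+(x-t)\fprime(t)\bigr)\fprime(t)\,\ddr t,
\]
which is your $\mathrm{III}$ with $y(t)$ replaced by its first bootstrap approximation $(L-t)\fprime(t)$ (and $-\log 2$ pulled out); they show $r(x)=x\log x-x+O(\log x)$ directly.

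This is precisely where your self-diagnosed $(\log\log(1/x))^2$ remainder can be sharpened to $O(\log\log(1/x))$: rather than expanding $\log\frac{1+y}{2}$ one step further (which produces the $O\!\bigl(\frac{\log(L-t)}{L-t}\bigr)$ error that integrates to $(\log L)^2$), keep $\int_0^{t_0}\fprime(t)\log\bigl(1+(L-t)\fprime(t)\bigr)\,\ddr t$ intact and split it as
\[
  \log L\cdot F(t_0)
  +\int_0^{t_0}\fprime(t)\log\fprime(t)\,\ddr t
  +\int_0^{t_0}\fprime(t)\log\Bigl(1+\tfrac{1-t\fprime(t)}{L\fprime(t)}\Bigr)\,\ddr t.
\]
The first term is handled by \eqref{eq:F(x)}, the second is $O(1)$, and the third equals $-L+\log L+O(1)$ by an elementary computation. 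The Lambert-$W$ remainder in the paper's expansion of $m(x,p)$ supplies the uniform $O\!\bigl(\frac{\log\log(1/x)}{\log(1/x)}\bigr)$ control on the difference between $y(t)$ and its first bootstrap approximation, replacing your separate boundary-layer analysis near $t_0$. Your de~Haan verification ($\ell(x)=\log x$, $c=1$) and the deduction of the $\E(K_n)$ expansion from Theorem~\ref{th:2nd-order} match the paper's.
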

%
%% Proof of Proposition 2
%
\begin{proof}
Let $m(x,p)\geq 0$ be defined by
\begin{equation}\label{eq:m(x,p)}
  p(1-p)^{m(x,p)}\frac{1}{2}\big(1+p+p\,m(x,p)\big)=x,
\end{equation}  
which corresponds to the solution in $m$ to the equation $w_{m+1}(p)=x$. Note that $m(x,p)\geq 0$ when $w_1(p)\geq x$, 
  $\overrightarrow\nu(x,p)=\lfloor m(x,p)+1 
  \rfloor \Indic_{(w_1(p)\geq x)}$
and, as in the geometric case,
  $m(x)\leq \overrightarrow\nu(x)\leq m(x)+1$
for  
\begin{equation*}
  m(x)=\int_0^1m(x,p)\Indic_{(w_1(p)\geq x)}\ddr p.
\end{equation*}  
Equation \eqref{lemma:m(x,p)} provides an asymptotic expansion of $m(x,p)$ as $x\to 0$. The proof is reported in the \hyperref[appn]{Appendix} and involves the Lambert W function \citep{Cor:etal:96}. 
\begin{multline}\label{lemma:m(x,p)}
  m(x,p)
  =\frac{\log x/p}{\log(1-p)}\\
  -\frac{1}{\log(1-p)}
  \log\bigg(
  \frac{1}{2}\bigg(1+p+p\frac{\log x/p}{\log(1-p)}
  +\frac{p}{\log(1-p)}\log\frac{-2\log(1-p)}{p}
  \bigg)\bigg)\\
  +\frac1{\log(1-p)}
  O\bigg(\frac{\log(\log1/x)}{\log 1/x}\bigg).
\end{multline}
An heuristic derivation 
inspired by \cite[Example 3.13]{Bar:Cox:89} is as follows. In equation \eqref{eq:m(x,p)}, we have that for small $x$, $m(x,p)$ will be large and the term 
  $p(1-p)^{m(x,p)}$ is
thus dominant. Rewrite the equation after taking the log and keeping $m(x,p)$ on the left hand side,
  $$m(x,p)=\frac{\log x/p}{\log(1-p)}
  -\frac{1}{\log(1-p)}\log\bigg(
  \frac{1}{2}\big(1+p+pm(x,p)\big)\bigg).$$ 
It defines a convergent iterative scheme via
  $$m_{(k)}(x,p)=\frac{\log x/p}{\log(1-p)}
  -\frac{1}{\log(1-p)}\log\bigg(
  \frac{1}{2}\big(1+p+pm_{(k-1)}(x,p)\big)\bigg)$$ 
with $m_{(1)}(x,p)$ solution to $p(1-p)^{m(x,p)}=x$, that is
  $$m_{(1)}(x,p)=\frac{\log x/p}{\log(1-p)}.$$
For $k=2$ we get
  $$m_{(2)}(x,p)=\frac{\log x/p}{\log(1-p)}
  -\frac{1}{\log(1-p)}\log\bigg(
  \frac{1}{2}\bigg(1+p+p\frac{\log x/p}{\log(1-p)}
  \bigg)\bigg),$$  
which nearly matches the expansion in \eqref{lemma:m(x,p)}.
Now we use it
to evaluate the behavior of $m(x)$ and, in turn, $\overrightarrow\nu(x)$ as $x\to 0$. Note that
  $$m(x)=\int_0^1m(x,p)\Indic_{(w_1(p)\geq x)}\ddr p
  =\int_{\tilde{x}}^1m(x,p)\ddr p,$$
where $\tilde{x}$ is defined by $w_1(\tilde x)=x$, that is 
  $\tilde{x}(1+\tilde{x})/2=x$.
It is easy to check that $x\leq\tilde{x}\leq 2x$ for any $x$ and $\tilde{x}\sim 2x$ as $x\to 0$. 
In order to exploit the derivation of the asymptotic expansion of $m(x)$ as $x\to 0^+$ in the geometric case, cf. proof of Proposition \ref{prop:pier}, we use \eqref{lemma:m(x,p)}
as follows:
  $$m(x)=\int_x^1\frac{\log x/p}{\log(1-p)}\ddr p
  -\int_{x}^{\tilde{x}}
  \frac{\log x/p}{\log(1-p)}\ddr p
  +\int_{\tilde{x}}^1\bigg(m(x,p)
  -\frac{\log x/p}{\log(1-p)}
  \bigg)\ddr p.$$
From Proposition \ref{prop:pier}, as $x\to0$ 
  $$\int_x^1\frac{\log x/p}{\log(1-p)}\ddr p
  =
  \frac{1}{2}\Big(\log\frac1x\Big)^2
  -\gamma\log\frac1x+O(1).$$
The first statement of the thesis about the behavior of $\overrightarrow\nu(x)$ as $x\to 0$ is then implied by $m(x)\leq \overrightarrow\nu(x)\leq m(x)+1$ and by showing that, as $x\to 0$,
\begin{align}
  \label{eq:pier2}
  \int_{x}^{\tilde{x}}
  \frac{\log x/p}{\log(1-p)}\ddr p&=O(1)\\
  \label{eq:pier3}
  \int_{\tilde{x}}^1\bigg(m(x,p)
  -\frac{\log x/p}{\log(1-p)}
  \bigg)\ddr p
  &=\log\frac1x\log\log\frac1x
  -(1+\log 2)\log\frac1x
  + O\bigg(\log\log\frac1x\bigg).
\end{align}
As for \eqref{eq:pier2}, 
the maximum of $(\log x/p)/\log(1-p)$ is attained at $p=p(x)$, where $p(x)$, the solution to the first order equation in $p$
  $-(1-p)\log(1-p)+p\log x/p=0$,
goes to zero as $x\to0$.
It can be shown that that  $p(x)\geq 2x$ for $x\leq 1/4$, that is $(\log x/p)/\log(1-p)$ is increasing for $x\leq p\leq2x$ and $x$ sufficiently small.
Since $2x\geq \tilde{x}$,
  $$\int_x^{\tilde{x}}
  \frac{\log x/p}{\log(1-p)}\ddr p
  \leq\int_x^{2x}
  \frac{\log x/p}{\log(1-p)}\ddr p
  \leq x\frac{\log x/(2x)}{\log(1-2x)}
  =-x\log2/\log(1-2x)\leq \log 2/2$$
so \eqref{eq:pier2} follows.
As for \eqref{eq:pier3}, note that by the change of variable $t=\log 1/p$,
  $$\int_{\tilde{x}}^1-\frac{1}{\log(1-p)}\ddr p
  =\int_0^{\log1/\tilde{x}}f(t)\ddr t
  =F(\log1/\tilde{x}),$$
for $\fprime(t)$ in \eqref{eq:m(x)} and $F(t)$ the primitive of $\fprime(t)$. By equation \eqref{lemma:m(x,p)},
\eqref{eq:F(x)} and $\tilde x\sim 2x$ as $x\to0$, we have that, as $x\to 0$,
\begin{multline*}%\label{eq:puf}
  \int_{\tilde{x}}^1
  \bigg(m(x,p)-\frac{\log x/p}{\log(1-p)}\bigg)\ddr p
  =-\log2\log1/x+ O(\log(\log 1/x))\\
  +\int_{\tilde{x}}^1
  -\frac{1}{\log(1-p)}
  \log\bigg(
  1+p
  +p\frac{\log x/p}{\log(1-p)}
  +\frac{p}{\log(1-p)}
  \log\frac{-2\log(1-p)}{p}
  \bigg)\ddr p.
\end{multline*}
In studying the asymptotic behavior of the integral in the last display, it is sufficient to focus on  
  $$\int_{\tilde x}^1
  -\frac{1}{\log(1-p)}
  \log\bigg(
  1+p\frac{\log x/p}{\log(1-p)}
  \bigg)\ddr p,$$
since the extra terms inside the logarithm satisfy
  $$-2\edr^{-1}
  \leq p+\frac{p}{\log(1-p)}\log\frac{-2\log(1-p)}{p}
%  =2\frac{p}{2|\log(1-p)|}
%  \log\bigg(2\frac{p}{|\log(1-p)|}
  \leq 1$$ 
for $0\leq p\leq1$. By the change of variable $t=\log 1/p$
  $$\int_{\tilde x}^1
  -\frac{1}{\log(1-p)}
  \log\bigg(
  1+p\frac{\log x/p}{\log(1-p)}
  \bigg)\ddr p
  =\int_0^{\log 1/\tilde{x}}
  \log\Big(1+\big(\log 1/\tilde{x} - t\big)
  \fprime(t)\Big)\,\fprime(t)\ddr t,$$
for $\fprime(t)$ defined in \eqref{eq:m(x)}. Hence, \eqref{eq:pier3} is implied by 
\begin{equation}\label{eq:pier4}
  r(x)=
  \int_0^{x}
  \log\big(1+\big(x - t\big)
  \fprime(t)\big)\,\fprime(t)\ddr t
  =x\log(x)-x+O(\log x),
%  (1-\gamma)\log x+O(1)
\end{equation}
as $x\to \infty$. We have
\begin{align*}
  r(x)&=
  \int_0^{x}\bigg(
  \log x+\log f(t)
  +\log\frac{1+\big(x - t\big)
  \fprime(t)}{x\fprime(t)}\bigg)\fprime(t)\ddr t\\
  &=\log(x)F(x)
  +\int_0^x \fprime(t)\log\fprime(t)\ddr t
  +\int_0^x\log\bigg(
  1+\frac{1-t\fprime(t)}{x\fprime(t)}
  \bigg)\fprime(t)\ddr t.
\end{align*}
The first term on the right hand side is
  $\log x(x-\gamma+O(\edr^{-x})),$
as $x\to\infty$, due to the asymptotic expansion of $F(x)$ in \eqref{eq:F(x)}. The second term is easily shown to be bounded in absolute value uniformly in $x$. As for the third term, we are left to show that
  $$\int_0^x\log\bigg(
  1+\frac{1-t\fprime(t)}{x\fprime(t)}
  \bigg)\fprime(t)\ddr t
  =-x+\log x+O(1),$$ 
as $x\to \infty$.
To this aim, it is convenient to split the integral as
\begin{equation}\label{eq:split}
  \int_0^1\log\bigg(
  1+\frac{1-t\fprime(t)}{x\fprime(t)}
  \bigg)\fprime(t)\ddr t+
  \int_1^x\log\bigg(
  1+\frac{1-t\fprime(t)}{x\fprime(t)}
  \bigg)\fprime(t)\ddr t.
\end{equation}  
The first integral in \eqref{eq:split} is bounded in $x$ since 
  $$\int_0^1\log\bigg(
  1+\frac{1-t\fprime(t)}{x\fprime(t)}
  \bigg)\fprime(t)\ddr t
  \leq\frac1x
  \int_0^1\big(1-t\fprime(t)\big)
  \ddr t,$$
whereas the last integral is a positive and finite constant. As for the second integral in \eqref{eq:split}, since $1-f(t)\sim\edr^{-t}/2$ for $t\to\infty$, cf. Lemma \ref{lemma:fprime}, 
  $$\int_1^x\log\bigg(
  1+\frac{1-t\fprime(t)}{x\fprime(t)}
  \bigg)\fprime(t)\ddr t
  \sim
  \int_1^x\log\bigg(
  1+\frac{1-t}{x}
  \bigg)\ddr t$$
as $x\to \infty$ and
  $$\int_1^x\log\bigg(
  1+\frac{1-t}{x}
  \bigg)\ddr t=-x+\log x+1$$
by direct calculation. Hence \eqref{eq:pier4}, and in turn \eqref{eq:pier3}, follow. The proof of the first statement of the proposition is then complete.
The second statement about the expansion of $\E(K_n)$, as $n\to\infty$, follows from an application of Theorem \ref{th:2nd-order}. 
\end{proof}

%%%%%%%%%%%%%%%%%%%%%%%%%%%%%%%%%%%%%%%%%%%%%%
%% Single Appendix:                         %%
%%%%%%%%%%%%%%%%%%%%%%%%%%%%%%%%%%%%%%%%%%%%%%
\begin{appendix}

\section*{Appendix}\label{appn}

\subsection*{Proof of Theorem \ref{th:2nd-order}}

The proof follows arguments similar to those of \cite[Theorem 3.9.1]{Bin:Gol:Teu:87}. It consists in evaluating 
  $\big(\Phi(n)-\overrightarrow\nu(1/n)\big)/\ell(1/n)$
in the decomposition
  $$\E(K_n)
  =\overrightarrow\nu(1/n)
  +\frac{\Phi(n)-\overrightarrow\nu(1/n)}{\ell(1/n)}\ell(1/n)
  +\E(K_n)-\Phi(n).$$
Indeed, as $\Phi(n)\sim\overrightarrow\nu(1/n)$ and $\ell(1/n)$ are slowly varying, 
  $|\E(K_n)-\Phi(n)|
  \leq \Small{\frac2n}\Phi(n)=o(\ell(1/n))$,
cf. \eqref{eq:Poissonization}, so the conclusion follows by showing that
  $\big(\Phi(n)-\overrightarrow\nu(1/n)\big)/\ell(1/n)
  \to-c\gamma$.
To this aim,
\begin{align*}
  \frac{\Phi(1/x)-\overrightarrow\nu(x)}{\ell(x)}
  &=\frac1{\ell(x)}\bigg[
  \int_0^\infty\frac1x\edr^{-y/x}\overrightarrow\nu(y)\ddr y
  -\int_0^\infty\overrightarrow\nu(x)\edr^{-\lambda}\ddr \lambda
  \bigg]\\
  &=\frac1{\ell(x)}\bigg[
  \int_0^\infty\edr^{-\lambda}\overrightarrow\nu(\lambda x)
  \ddr \lambda
  -\int_0^\infty\overrightarrow\nu(x)\edr^{-\lambda}\ddr \lambda
  \bigg]
  =\int_0^\infty
  \frac{\overrightarrow\nu(\lambda x)-\overrightarrow\nu(x)}{\ell(x)}
  \edr^{-\lambda}\ddr \lambda\\
  &\to\int_0^\infty c(\log\lambda)\edr^{-\lambda}
  \ddr\lambda
  =c\Gamma'(1)=-c\gamma,\quad\text{as }x\to 0,
\end{align*} 
where in taking the limit we used the dominated convergence theorem, cf. global bounds in \cite[Theorem 3.8.6]{Bin:Gol:Teu:87}. 
\hfill \qed

\subsection*{Proof of Lemma \ref{lemma:fprime}}

We will use the following integral representation of the Euler-Mascheroni constant: 
  $$\gamma=\int_0^\infty\bigg( \frac{1}
  {1-\edr^{-x}}-\frac{1}{x}
  \bigg)\edr^{-x}\ddr x.$$
By the change of variable $t=-\log(1-\edr^{-x})$ so that
  $\ddr t=-\frac{\edr^{-x}}{1-\edr^{-x}} \ddr x$
and
  $x=-\log(1-\edr^{-t})$,
we obtain
\begin{align*}
  \gamma
  &=\int_0^\infty\bigg( \frac{1}
  {1-\edr^{-x}}-\frac{1}{x}
  \bigg)\edr^{-x}\ddr x
  =\int_0^\infty\bigg(1-\frac{1-\edr^{-x}}{x}
  \bigg)\frac{\edr^{-x}}{1-\edr^{-x}}\ddr x\\
  &=\int_0^\infty\bigg(1-\frac{\edr^{-t}}
  {-\log(1-\edr^{-t})}
  \bigg)\ddr t
  =\int_0^\infty \big(1-\fprime(t)\big)\ddr t.
\end{align*}
It is easy to check that $\lim_{t\to 0}\fprime(t)=0$ and $\lim_{t\to\infty}\fprime(t)=1$.
As for the tail behavior, by the Taylor expansion of $\log(1+x)=x-x^2/2+O(x^3)$ as $x\to 0$, we find that, as $t\to\infty$,
\begin{align*}
  1-\fprime(t)
  &=1-\frac{\edr^{-t}}{-\log(1-\edr^{-t})}
  \sim 1-\frac{\edr^{-t}}{\edr^{-t}+\edr^{-2t}/2}
  =\frac{\edr^{-2t}/2}{\edr^{-t}+\edr^{-2t}/2}
  \sim\frac{\edr^{-t}}{2}.
\end{align*}
\hfill \qed

\subsection*{Proof of Equation \eqref{lemma:m(x,p)}}

Let $W(z)$ be the Lambert function defined by
  $W(z)\edr^{W(z)}=z,$
where $W(z)$ is a multivalued function that has, for $z$ a real number, two branches, the principal branch $W_0(z)$ for $W(z)\geq -1$, and the branch $W_{-1}(z)$ for $W(z)< -1$. We have that $\lim_{z\to0^+}W_0(z)=0$ while $\lim_{z\to 0^-}W_{-1}(z)=-\infty$.
In particular, according to \cite[Section 4]{Cor:etal:96}, as $z\to 0^-$
\begin{equation}\label{eq:LambertW}
  W_{-1}(z)=
  \log(-z)-\log(-\log(-z))+O\bigg(
  \frac{\log(-\log(-z))}{\log(-z)}\bigg).
\end{equation}
By algebraic manipulation of \eqref{eq:m(x,p)} 
\begin{align*}
  &p(1-p)^{m(x,p)}\frac{1}{2}(1+p+p\,m(x,p))=x;\quad
  \edr^{\log(1-p)m(x,p)}(1+p+p\,m(x,p))=2x/p;\\
  &(1+p+p\,m(x,p))\log(1-p)\edr^{\log(1-p)m(x,p)}
  =\frac{2x\log(1-p)}{p};\\
  &(1+p+p\,m(x,p))\log(1-p)\edr^{\log(1-p)(1/p+1+m(x,p)}
  =\frac{2x\log(1-p)}{p}\edr^{(1/p+1)\log(1-p)};\\
  &\frac{\log(1-p)}{p}(1+p+p\,m(x,p))
  \edr^{\frac{\log(1-p)}{p}(1+p+p\,m(x,p))}
  =\frac{2x\log(1-p)}{p^2}
  \edr^{\frac{1+p}{p}\log(1-p)};\\
  &\frac{\log(1-p)}{p}(1+p+p\,m(x,p))
  =W(z),
\end{align*}
where, in the last display,
\begin{equation}\label{eq:z}
  z=\frac{2x\log(1-p)}{p^2}
  \exp\Big(\frac{1+p}{p}\log(1-p)\Big).
\end{equation}  
Solving for $m(x,p)$,
\begin{equation}\label{eq:T(x)_2}
  m(x,p)=\frac{1}{p\log(1-p)}
  \bigg(pW_{-1}(z)
  -\log(1-p)\bigg)-1,
\end{equation}  
where we used the branch $W_{-1}$ of $W(z)$ since $z$ in \eqref{eq:z} is $\leq 0$ and $W(z)\leq-1$. The fact that $W(z)\leq-1$ is easily checked by using $m(x,p)\geq 0$. In fact
\begin{align*}
  &\frac{1}{p\log(1-p)}\bigg(pW(z)
  -\log(1-p)\bigg)-1\geq 0;\quad
  pW(z)-\log(1-p)\leq p\log(1-p);\\
  &pW(z)\leq \log(1-p)(1+p);\quad 
  W(z)\leq \log(1-p)\frac{1+p}{p}
\end{align*}
and $\frac{1+p}{p}\log(1-p)$ decreases from $-1$ to $-\infty$ for $p\in(0,1)$. From \eqref{eq:z} one finds that $z\to 0^-$ as $x\to 0^+$. In particular, from $w_1(p)>x$, that is $p(1+p)/2>x$, it follows that
  $$\frac{1+p}{p}\log(1-p)
  \exp\Big(\frac{1+p}{p}\log(1-p)\Big)
  \leq z\leq 0$$
and the lower bound is larger than $-\edr^{-1}$ for any $p\in(0,1)$. 
Hence $\log(-z)<-1$ and $\log(-\log(-z))>0$. 
By direct calculation
  $$\log(-z)
  =\log(1-p)\bigg(
  \frac{\log x/p}{\log(1-p)}
  +\frac1{\log(1-p)}\log\frac{-2\log(1-p)}{p}
  +\frac{1+p}{p}\bigg)$$
and
  $$\frac{1}{p\log(1-p)}\bigg(p\log(-z)
  -\log(1-p)\bigg)-1
  =\frac{\log x/p}{\log(1-p)}+
  \frac{1}{\log(1-p)}
  \log\frac{-2\log(1-p)}{p}.$$
Substitute in \eqref{eq:T(x)_2} $W_{-1}(z)$ for $\log(-z)-\log(-\log(-z))$ according to the two terms expansion in \eqref{eq:LambertW}, to find
\begin{align*}
  \frac{1}{p\log(1-p)}
  &\bigg(p\Big(\log(-z)
  -\log(-\log(-z))\Big)
  -\log(1-p)\bigg)-1\\
  &=\frac{1}{p\log(1-p)}
  \bigg(p\log(-z)-\log(1-p)\bigg)-1
  -\frac{1}{\log(1-p)}\log(-\log(-z))\\
  &=\frac{\log x/p}{\log(1-p)}+
  \frac{1}{\log(1-p)}
  \log\frac{-2\log(1-p)}{p}
  -\frac{1}{\log(1-p)}\log(-\log(-z))\\
  &=\frac{\log x/p}{\log(1-p)}
  -\frac{1}{\log(1-p)}
  \log\bigg(
  \frac{p}{2\log(1-p)}
  \log(-z)\bigg)\\
  &=\frac{\log x/p}{\log(1-p)}
  -\frac{1}{\log(1-p)}
  \log\bigg(
  \frac{p}{2}
  \bigg(
  \frac{\log x/p}{\log(1-p)}
  +\frac1{\log(1-p)}\log\frac{-2\log(1-p)}{p}
  +\frac{1+p}{p}\bigg)
  \bigg)\\
  &=\frac{\log x/p}{\log(1-p)}
  -\frac{1}{\log(1-p)}
  \log\bigg(\frac{1}{2}\bigg(
  1+p
  +p\frac{\log x/p}{\log(1-p)}
  +\frac{p}{\log(1-p)}
  \log\frac{-2\log(1-p)}{p}
  \bigg)\bigg).
\end{align*}
The remainder of the expansion is easily found. 
\hfill \qed

\end{appendix}

%--------------------------------------------------------
%--------------------------------------------------------
%\begin{supplement}
%\sname{Supplement A}\label{suppA} 
%\stitle{Title of the Supplement A}
%\slink[url]{http://www.some-url-address.org/dowload/0000.zip}
%\sdescription{Add description for supplement material.}
%\end{supplement}

%\section*{Acknowledgement}
%The authors are grateful to an Associate Editor and two Referees for their helpful comments and suggestions. R.H. Mena gratefully acknowledges the  support of PAPIIT-UNAM project IG100221. P. De Blasi and I. Pr\"unster are supported by MIUR, PRIN Project 2015SNS29B. P. De Blasi also acknowledges \comillas{Dipartimenti di Eccellenza} grant 2018-2020 and CNM for supporting this research.

%--------------------------------------------------------
%--------------------------------------------------------

%\bibliographystyle{asa}
\bibliographystyle{apalike}
%\bibliography{/Users/pierpaolodeblasi/Documents/biblio}
\bibliography{short-biblio}

\end{document}